\numberwithin{equation}{section}
\numberwithin{subsection}{section}
\theoremstyle{plain}
\newtheorem{teo}{Theorem}[section]
\newtheorem{prop}{Proposition}[section]
\newtheorem{cor}{Corollary}[section]
\newtheorem{lem}{Lemma}[section]
\newtheorem{rem}{Remark}[section]
\newcommand{\beq}{\begin{equation}}
\newcommand{\beqn}{\begin{equation*}}
\newcommand{\eeq}{\end{equation}}
\newcommand{\eeqn}{\end{equation*}}
\newcommand{\N}{{\mathbb N}}
\newcommand{\R}{\mathbb{R}}
\newcommand{\Z}{{\mathbb Z}}
\begin{document}

\begin{frontmatter}
\title{A linear stochastic biharmonic heat equation: hitting probabilities}
\runtitle{stochastic biharmonic heat equation}

\begin{aug}
\author{\fnms{Adri\'an} \snm{Hinojosa-Calleja}\thanksref{t3}\ead[label=e1]{ahinojosa@ub.edu}}
\and
\author{\fnms{Marta} \snm{Sanz-Sol\'e}\thanksref{t2}\ead[label=e2]{marta.sanz@ub.edu}}

\address{Facultat de Matem\`atiques i Inform\`atica, Universitat de Barcelona\\
Gran Via de les Corts Catalanes, 585, E-08007 Barcelona, Spain\\
Barcelona Graduate School of Mathematics\\
\printead{e1,e2}}



\thankstext{t2}{Partially supported by the grant PID2020-118339GB-I00 from the \textit{Ministerio de Ciencia e Innovaci\'on}, Spain.}
\thankstext{t3}{Supported by the grant BES-2016077051 from the \textit{Ministerio de Ciencia e Innovaci\'on}, Spain.}
\runauthor{A. Hinojosa-Calleja, M. Sanz-Sol\'e}

\affiliation{University of Barcelona}

\end{aug}

\begin{abstract}
Consider the linear stochastic biharmonic heat equation on a $d$--dimensional torus ($d=1,2,3$), driven by a space-time white noise and with periodic boundary conditions:  
\beq
\label{0}
\left(\frac{\partial}{\partial t}+(-\Delta)^2\right) v(t,x)= \sigma \dot W(t,x),\  (t,x)\in(0,T]\times \mathbb{T}^d,
\eeq
 $v(0,x)=v_0(x)$.
  We find the canonical pseudo-distance corresponding to the random field solution, therefore the precise description of the anisotropies of the process. We see that for $d=2$, they include a $z(\log \tfrac{c}{z})^{1/2}$ term. Consider $D$ independent copies of the random field solution to \eqref{0}. Applying the criteria proved in \cite{hin:san}, we establish upper and lower bounds for the probabilities that the path process hits bounded Borel sets.This yields results on the polarity of sets and on the Hausdorff dimension of the path process. 

\end{abstract}

\begin{keyword}[class=MSC]
\kwd[Primary ]{60G60, 60G15, 60H15}
\kwd[; secondary ]{60G17}
\end{keyword}

\begin{keyword}
\kwd{Systems of linear SPDEs}
\kwd{Sample paths properties}
\kwd{hitting probabilities}
\kwd{polar sets}
\kwd{capacity}
\kwd{Hausdorff measure}
\end{keyword}
\end{frontmatter}

\section{Introduction}
\label{s1}

This paper is motivated by the study of sample path properties of stochastic partial differential equations (SPDEs) and its applications to questions like the polarity of sets for the path process and its Hausdorff dimension (a.s.). We focus on a system of stochastic linear biharmonic heat equations on a $d$-dimensional torus, $d=1,2,3$,  with periodic boundary conditions (see \eqref{eq1.1}). This SPDE is the linearization at zero of a Cahn-Hilliard equation with a space-time white noise forcing term. 

In the last two decades, there has been many contributions to the subject of this paper. A large part of them concern Gaussian random fields, the case addressed in this work. A representative sample of results can be found in \cite{dal:kho}, \cite{dal:san}, \cite{x-m-d}, \cite{ss-viles}, \cite{xia}, and references therein.
Central to the study is obtaining upper and lower bounds on the probabilities that the random field hits a Borel set $A$, in terms of the Hausdorff measure and the capacity, respectively, of $A$.  In the derivation of the bounds --named {\em criteria for hitting probabilities}-- a major role is played by the canonical pseudo-distance associated to the process. For a random field $(v(t,x),\ (t,x)\in[0,T]\times \mathbb{D})$, $\mathbb{D}\subset \R^d$, this notion is defined by
\begin{equation*}
\mathfrak{d}_{v}((t,x),(s,y))=\Vert v(t,x)-v(s,y)\Vert_{L^2(\Omega)},\ (t,x), (s,y)\in[0,T]\times \mathbb{D}.
\end{equation*}
When $\mathfrak{d}_{v}((t,x),(s,y))$ compares, up to multiplicative constants, with  $\vert t-s\vert^{\alpha_0}+\sum_{j=1}^d\vert x_j-y_j\vert^{\alpha_j}$, $\alpha_0, \alpha_j\in (0,1)$, \cite{xia}[Theorem 7.6] and \cite{dal:san}[Theorems 2.1, 2.4 and 2.6]) provide useful criteria for hitting probabilities. 

Let $(u(t,x),\ (t,x)\in[0,T]\times\mathbb{T}^d)$, $d=1,2,3$, be the random field solution to the biharmonic heat equation driven by space-time noise, given in Theorem \ref{t1.1}. 
We prove in Theorem \ref{teo2.1}  that the associated canonical pseudo-distance 
$\mathfrak{d}_{u}((t,x),(s,y))$  compares with
 \beqn
  \left(\vert t-s\vert^{1-d/4}+\left(\log\frac{C(d)}{\vert x-y\vert}\right)^\beta\vert x-y\vert^{2\wedge(4-d)}\right)^{\frac{1}{2}}, \ \beta=1_{\{d=2\}}.
  \eeqn
Thus, when $d=2$ this example does not fall into the range of applications of the criteria cited above. 

In \cite{hin:san}[Theorems 3.2, 3.3, 3.4, 3.5], we proved extensions of \cite{xia}[Theorem 7.6] to cover cases where the canonical pseudo-distance has anisotropies described by gauge functions other than power functions. This was initially motivated by the study of a linear heat equation with fractional noise
(see \cite{hin:san}[Section 4]). From the above discussion, we see that the biharmonic heat equation provides a new case of application of such extended criteria.

The structure and contents of the paper are as follows. Section \ref{s2} is about preliminaries. We formulate and prove the existence of a random field solution to the biharmonic heat equation, and recall the notions of  Hausdorff measure relative to a gauge function and capacity relative to a symmetric potential. Section \ref{s3} is devoted to find the equivalent pseudo-distance for the canonical metric --a result of independent interest. The proof relies on a careful analytical study of the Green's function of the biharmonic operator $\mathcal{L} = \frac{\partial}{\partial t} +(-\Delta)^2$ on $(0,T)\times \mathbb{T}^d$. 
With this fundamental result at hand and some additional properties of $(u(t,x))$ proved in Sections \ref{s4} and \ref{s-6}, we are in a position to apply Theorems 3.4 and 3.5 of \cite{hin:san}. We deduce Theorem \ref{s5-t5.1} on upper and lower bounds for the hitting probabilities of $D$-dimensional random vectors consisting of independent copies of $(u(t,x))$.  These are in terms of the $\bar g_q$-Hausdorff measure and the $(\bar g_q)^{-1}$-capacity, respectively, with $\bar g_q$ defined in \eqref{5.1}. Notice that for $d=1,3$, the bounds are given by the classical Hausdorff mesure and the Bessel-Riesz capacity, respectively. In the second part of Section \ref{s5}, we highlight some consequences of Theorem \ref{s5-t5.1} on polarity of sets and Hausdorff dimension of the path process. 
The application of Theorems 3.4 and 3.5 of \cite{hin:san} imposes the restriction
$D>D_0$, where $D_0=[(4-d)/8]^{-1}+d[1\wedge(2-d/2)]^{-1}$. We also discuss the case $D<D_0$ and present some conjectures concerning the critical case $D=D_0$ in the last part of Section \ref{s5}.

\section{Notations and preliminaries}
\label{s2}

We introduce some notation used throughout the paper. As usually, $\mathbb{N}$ denotes the set of natural numbers $\{ 0,1,2,...\}$; we set  $\mathbb{Z}_2 = \{ 0,1\}$, and for any integer $d\ge 1$, $\mathbb{N}^{d,*} = (\mathbb{N}\setminus\{ 0\})^d$. For any multiindex $k=(k_1,\ldots,k_d)\in \N^d$, we set $|k|=(\sum_{j=1}^d k_j^2)^{1/2}$, and denote by $n(k)$ the number of null components of $k$. 

Let $\mathbb{S}^1$ be the circle and $\mathbb{T}^d= \mathbb{S}^1\times \stackrel{d}{\ldots}\times \mathbb{S}^1$ the $d$-dimensional torus. 
For $x\in \mathbb{T}^d$, $|x|$ denotes the Euclidean norm. If we identify  
$\mathbb{T}^d$ with the periodic cube $[-\pi,\pi]^d$, meaning that opposite sides coincide, $|x|$ can be interpreted as the distance of $x$ to the origin.


For $x\in[0,2\pi)$, let $\varepsilon_{0,k}(x)=\pi^{-1/2}\sin(kx)$, $\varepsilon_{1,k}(x)=\pi^{-1/2}\cos(kx)$, $k\in\mathbb{N}^\ast$,
and $\varepsilon_{1,0}(x) = (2\pi)^{-1/2}$. The set of functions $\textbf{B}$ defined on $\mathbb{T}^d$ consisting of
\beqn
\varepsilon_{i,k}:=\varepsilon_{i_1,k_1}\otimes \cdots\otimes \varepsilon_{i_d,k_d},\ i=(i_1,\ldots,i_d)\in \mathbb{Z}_2^d,
\eeqn
with $k_j\in \mathbb{N}^*$ if $i_j=0$, and $k_j\in \N$ if $i_j=1$, is an orthonormal basis for $L^2(\mathbb{T}^d)$. 

Define 
\beqn
(\mathbb{Z}_2\times \mathbb{N})^d_{+} = \{(i,k)\in (\mathbb{Z}_2\times \mathbb{N})^d: (i_j,k_j)\ne (0,0),\  \forall j=1,\ldots,d\}.
\eeqn
Notice that  $\textbf{B}= \{\varepsilon_{i,k} =\varepsilon_{i_1,k_1}\otimes \cdots\otimes \varepsilon_{i_d,k_d},\ (i,k)\in(\mathbb{Z}_2\times \mathbb{N})^d_{+}\}$. 

The following equality is a straightforward consequence of the formula for the cosinus of a sum of angles: For any $x,y\in\mathbb{T}^d$,
\begin{equation}\label{basis}
\sum_{i\in\mathbb{Z}^d_2}\varepsilon_{i,k}(x)\varepsilon_{i,k}(y)=\frac{1}{2^{n(k)}\pi^d}\prod_{j=1}^d\cos(k_j (x_j-y_j) ),\ 
k\in\mathbb{N}^{d}\ {\text{with}}\ (i,k)\in (\mathbb{Z}_2\times \mathbb{N})^d_{+}.
\end{equation}

Let $(-\Delta)^2$ be the biharmonic operator (also called {the \em bilaplacian}) on  $L^2(\mathbb{T}^d)$.  The basis $\textbf{B}$ is a set of eigenfunctions of $(-\Delta)^2$ with associated eigenvalues $\lambda_k=\sum_{j=1}^d k_j^4$,  $k\in \mathbb{N}^{d}$. Observe that $d^{-1}|k|^4\le \lambda_k\le  |k|^4$, and $\inf_{k\in \N^{d,*}} \lambda_k=d$.

The Green's function of the biharmonic heat operator $\mathcal{L} = \frac{\partial}{\partial t} +(-\Delta)^2$ on $(0,T] \times \mathbb{T}^d$ is given by
\begin{align}
\label{eq1.2}
G(t;x,y)=\sum_{(i,k)\in(\mathbb{Z}_2\times \mathbb{N})^d_{+} } e^{-\lambda_k t}\varepsilon_{i,k}(x)\varepsilon_{i,k}(y)
=\sum_{k\in\mathbb{N}^d}\frac{e^{-\lambda_k t}}{2^{n(k)}\pi^d}\prod_{j=1}^d\cos(k_j (x_j-y_j) ),
\end{align}
the last equality being a consequence of \eqref{basis}.

This paper concerns the linear stochastic biharmonic heat equation 
\beq
\label{eq1.1}
\begin{cases}
\left(\frac{\partial}{\partial t}+(-\Delta)^2\right) v(t,x)= \sigma \dot W(t,x), & (t,x)\in(0,T]\times \mathbb{T}^d,\\
 v(0,x)=v_0(x),
 \end{cases}
\eeq
where $(\dot W(t,x))$ is a space-time white noise on $[0,T]\times \mathbb{T}^d$, $\sigma\in\R\setminus \{0\}$ and $v_0: \  \mathbb{T}^d\longrightarrow \R$.

We consider the random field solution to \eqref{eq1.1}, that is, the stochastic process
\begin{equation}
\label{eq1.1bis}
v(t,x)=\int_{\mathbb{T}^d} G(t;x,z)v_0(z)dz + \sigma \int_0^t\int_{\mathbb{T}^d} G(t-r;x,z)W(dr,dz),
\end{equation}
with $G$ given in \eqref{eq1.2}, and the stochastic integral is a Wiener integral  with respect to space-time white noise.

We assume that, for any $(t,x)\in(0,T]\times \mathbb{T}^d$, the function $\mathbb{T}^d\ni z\mapsto G(t;x,z) v_0(z)$ belongs to $L^1(\mathbb{T}^d)$. Along with the next Theorem, this yields that $(v(t,x),\ (t,x)\in [0,T]\times \mathbb{T}^d)$ is a well-defined Gaussian process.

\begin{teo}
\label{t1.1} Let
\beqn
u(t,x) = \int_0^t\int_{\mathbb{T}^d} G(t-r;x,z)W(dr,dz),\quad (t,x)\in [0,T]\times \mathbb{T}^d.
\eeqn
The stochastic process $(u(t,x),\ (t,x)\in[0,T]\times\mathbb{T}^d)$ is well-defined if and only if $d=1, 2, 3$. In this case,
\beq
\label{1.1}
\sup_{(t,x)\in[0,T]\times \mathbb{T}^d}E(\vert u(t,x)\vert^2)<\infty.
\eeq
\end{teo}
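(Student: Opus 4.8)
The plan is to compute the $L^2(\Omega)$-norm of $u(t,x)$ explicitly using the Wiener isometry and then analyze the resulting series. First I would write, for fixed $(t,x)$,
\[
E(|u(t,x)|^2) = \int_0^t \int_{\mathbb{T}^d} G(t-r;x,z)^2\, dz\, dr = \int_0^t \left(\int_{\mathbb{T}^d} G(s;x,z)^2\, dz\right) ds,
\]
after the change of variable $s=t-r$. Using the orthonormal-basis expansion \eqref{eq1.2} of $G$ and Parseval's identity in $L^2(\mathbb{T}^d)$ with respect to the basis $\textbf{B}$, the inner spatial integral becomes $\sum_{(i,k)\in(\mathbb{Z}_2\times\mathbb{N})^d_+} e^{-2\lambda_k s}\,\varepsilon_{i,k}(x)^2$. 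Since each $|\varepsilon_{i,k}|$ is bounded by a universal constant (depending only on $d$), this is bounded above and below (up to constants, and after summing over $i$ using \eqref{basis} at $x=y$) by $\sum_{k\in\mathbb{N}^d} e^{-2\lambda_k s}$, uniformly in $x$. Integrating in $s$ over $[0,t]$ and then over $t\in[0,T]$, the question reduces to whether
\[
\int_0^T \sum_{k\in\mathbb{N}^d} e^{-2\lambda_k s}\, ds < \infty,
\]
equivalently (by monotone convergence, swapping sum and integral) whether $\sum_{k\in\mathbb{N}^d} \lambda_k^{-1}(1-e^{-2\lambda_k T}) < \infty$, which since $\lambda_k\ge d$ for $k\neq 0$ is finite iff $\sum_{k\in\mathbb{N}^{d,*}} \lambda_k^{-1}<\infty$.

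The core step is then the convergence analysis of $\sum_{k\in\mathbb{N}^{d,*}} \lambda_k^{-1}$. Using the two-sided bound $d^{-1}|k|^4 \le \lambda_k \le |k|^4$ stated in the excerpt, this series converges iff $\sum_{k\in\mathbb{N}^{d,*}} |k|^{-4}<\infty$. Comparing with the integral $\int_{|x|\ge 1} |x|^{-4}\, dx$ in $\R^d$ (in polar coordinates, $\int_1^\infty \rho^{d-1-4}\, d\rho$), this is finite precisely when $d-4 < 0$, i.e. $d\le 3$; for $d\ge 4$ the series diverges. This gives both directions of the "if and only if": for $d=1,2,3$ the quantity $\sup_{(t,x)} E(|u(t,x)|^2)$ is finite because the upper bound is a finite constant independent of $(t,x)$, proving \eqref{1.1}; for $d\ge 4$ one shows $E(|u(t,x)|^2)=+\infty$ for every $t>0$ (uniformly in $x$ by the lower bound), so the process is not even well-defined as a real-valued random field. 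I would also note that the lower bound for $E(|u(t,x)|^2)$ is needed only to establish the "only if" part, and it follows from keeping just the diagonal terms or from \eqref{basis} evaluated at $x=y$.

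The main obstacle, such as it is, lies in being careful about two points: first, justifying the interchange of summation and integration (all terms are nonnegative, so Tonelli applies, but this should be stated), and second, handling the uniformity in $x$ cleanly — the upper bound is immediate from $\|\varepsilon_{i,k}\|_\infty \le \pi^{-d/2}$, but one wants the estimate to be genuinely uniform so that the supremum, not just each pointwise value, is finite. A minor additional care is the behavior near $t=0$: the integrand $\sum_k e^{-2\lambda_k s}$ may blow up as $s\downarrow 0$ (it behaves like $s^{-d/4}$), but since $d/4<1$ for $d\le 3$ the integral $\int_0^t s^{-d/4}\, ds$ still converges, which is exactly the same arithmetic condition $d<4$. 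None of these steps is technically deep; the whole proof is a direct computation once the Wiener isometry and Parseval are invoked, and the dimensional threshold $d\le 3$ emerges transparently from the convergence of $\sum |k|^{-4}$ in $\mathbb{Z}^d$.
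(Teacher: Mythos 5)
Your proposal is correct and follows essentially the same route as the paper: Wiener isometry plus the eigenfunction expansion \eqref{eq1.2} of $G$, reduction (after summing over $i$ via \eqref{basis} at $x=y$, which makes the expression independent of $x$) to the series $\sum_{k\ne 0}\lambda_k^{-1}(1-e^{-2\lambda_k t})$, and comparison with $\sum_{k\ne 0}|k|^{-4}$ using $d^{-1}|k|^4\le\lambda_k\le|k|^4$ and the bounds $\frac{u}{1+u}\le 1-e^{-u}\le 1$, yielding convergence iff $d\le 3$. The only slip is the claim that $\lambda_k\ge d$ for all $k\ne 0$ (false for, e.g., $k=(1,0)$ in $d=2$, where $\lambda_k=1$; the paper's bound $\inf\lambda_k=d$ is over $k\in\mathbb{N}^{d,*}$, i.e.\ all components nonzero), but the correct uniform bound $\lambda_k\ge 1$ for $k\ne 0$ is all your argument needs, so nothing breaks.
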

\begin{proof} 
Fix $(t,x)\in(0,T]\times \mathbb{T}^d$. By \eqref{eq1.2} and applying Fubini's theorem, we have
\begin{align}
\label{eq1.4}
\int_0^t dr \int_{\mathbb{T}^d} dz\  G^2(t-r; x,z) &= \sum_{(i,k)\in(\mathbb{Z}_2\times \mathbb{N})^d_{+} }\varepsilon_{i,k}^2(x) \left(\int_0^t dr\  e^{-2\lambda_k r}\right)\notag\\
&=\sum_{k\in\mathbb{N}^{d}}\frac{1}{2^{n(k)}\pi^d}\int_0^t dr\  e^{-2\lambda_k r}\notag\\
& = \frac{t}{(2\pi)^d} + \sum_{\substack{k\in\mathbb{N}^{d}\\0\le n(k)\le d-1}} \frac{1-e^{-2\lambda_k t}}{2^{n(k)+1}\pi^d\lambda_k}.
\end{align}
 Apply the inequalities
$\frac{u}{1+u}\leq 1-e^{-u}\leq 1$, valid for all $u\geq 0$, to see that the series in \eqref{eq1.4} is equivalent to a harmonic series $\sum_{\substack{k\in\mathbb{N}^{d}\\ 0\le n(k)\le d-1}}\frac{1}{|k|^4}$, which converges if and only if $d\le 3$. Equivalently, the Wiener integral defining $u(t,x)$ is well-defined if and only if $d\le 3$.
This finishes the proof of the first statement.

By the isometry property of the Wiener integral, $E((u(t,x))^2)$ is equal to the right-hand side of \eqref{eq1.4}. Taking the supremum in \eqref{eq1.4}, we have
\begin{align*}
\sup_{(t,x)\in[0,T]\times \mathbb{T}^d} E((u(t,x))^2)
&\le \frac{T}{(2\pi)^d}+ \sup_{t\in[0,T]} \sum_{k\in\mathbb{N}^{d}}\frac{1-e^{-2\lambda_k t}}{2^{n(k)+1}\pi^d\lambda_k}\\
& \le \frac{T}{(2\pi)^d} + \sum_{k\in\mathbb{N}^{d}}\frac{1}{2^{n(k)+1}\pi^d\lambda_k} \le C(T,d).
\end{align*}
\end{proof}
In the sequel we will take $d\in\{1,2,3\}$. 
\medskip

In the last part of this section, we recall the notions of Hausdorff measure and capacity that will be used 
in of Section \ref{s5}.
\smallskip

\noindent{\em $g$-Hausdorff measure}
\smallskip

Let $\varepsilon_0>0$ and $g: [0,\varepsilon_0 ]\rightarrow \R_+$ be a continuous strictly increasing function satisfying $g(0)=0$. The {\em  $g$-Hausdorff measure} of 
 a Borel set $A\subset\mathbb{R}^D$ is defined  by 
\beqn
\mathcal{H}_g(A)=\lim_{\varepsilon\downarrow 0} \inf\left\{\sum_{i=1}^\infty g(2r_i): A\subset\bigcup_{i=1}^\infty B_{r_i}(x_i),\ \sup_{i\geq 1}r_i\leq\varepsilon\right\}
\eeqn
(see e.g. \cite{rogers}). In this paper, we will use this notion referred to two examples:\ (i)  $g(\tau)= \tau^\gamma$, with $\gamma>0$; this is the classical $\gamma$-dimensional Hausdorff measure.  (ii)\  $g(\tau) = \tau^{\nu_1}\left(q^{-1}(\tau)\right)^{-\eta}$, with $q(\tau) = \tau^{\nu_2}\left(\log\frac{c}{\tau}\right)^{\delta}$, 
$\nu_1, \nu_2, \eta, \delta >0$.

By coherence with the definition of the $\gamma$-dimensional Hausdorff measure when $\gamma<0$, if $g(0)=\infty$, we set $\mathcal{H}_g(A)=\infty$.
\smallskip

\noindent{\em Capacity relative to a symmetric potential kernel}
\smallskip

Let $\textgoth{g}: \R^D\longrightarrow \R_+\cup \{\infty\}$ be continuous on $\R^D\setminus\{0\}$, symmetric, $\textgoth{g}(z)> 0$, for all $z\ne 0$,  $\textgoth{g}(0) = \infty$. This function is called a {\em symmetric potential}. The $\textgoth{g}$-{\em capacity} of a Borel set $A\subset\mathbb{R}^D$ is defined  by
\beqn
\text{Cap}_{\textgoth{g}}(A) = \left[ \inf_{\mu\in\mathbb{P}(A)}\mathcal{E}_{\textgoth{g}}(\mu)\right]^{-1},
\eeqn
where 
$\mathcal{\mathcal{E}}_{\textgoth{g}}(\mu) = \int_{\R^D\times \R^D} \textgoth{g}(y-\bar y)\ \mu(dy)\mu(d\bar y)$ and
$\mathbb{P}(A)$ denotes the set of probability measures on $A$. 
If $\textgoth{g}(0)\in [0,\infty)$, we set $\text{Cap}_{\textgoth{g}}(A)= 1$, by convention. 

In this article, we will use this notion with $\textgoth{g}=1/g$, where $g$ is as in the examples (i) and (ii) above.  Observe that, in the example (i), the $\textgoth{g}$-{\em capacity} is  the Bessel-Riesz capacity, usually denoted by $\text{Cap}_\gamma (A)$ (see e.g. \cite[p. 376]{kho}).
\smallskip

Throughout the article, 
positive real constants are denoted by $C$, or variants, like $\bar C$, $\tilde C$, $c$, etc. If we want to make explicit the dependence on some parameters $a_1, a_2,\ldots$, we write $C(a_1, a_2, \ldots)$ or $C_{a_1,a_2,\ldots}$. When writing $\log\left(\frac{C}{z}\right)$, we will assume that $C$ is large enough to ensure $\log\left(\frac{C}{z}\right)\ge1$.


\section{Equivalence for the canonical metric} 
\label{s3}

For the process $u$ of Theorem \ref{t1.1}, we define 
\begin{equation}
\label{d}
\mathfrak{d}_{u}((t,x),(s,y))=\Vert u(t,x)-u(s,y)\Vert_{L^2(\Omega)}.
\end{equation}
This is the canonical pseudo-distance associated with $u$. 
This section is devoted to establish an equivalent (anisotropic) pseudo-distance for $\mathfrak{d}_{u}$.

 Throughout the proofs, we will make frequent use of  the identity
\begin{align}
\label{isometry}
&\Vert u(t,x)-u(s,y)\Vert^2_{L^2(\Omega)}
 = \frac{1}{2^{n(k)+1}\pi^d}\notag\\
&\quad\times \sum_{k\in\N^{d,*}} \frac{1-e^{-2 \lambda_k s}}{\lambda_k}
\left(e^{-2\lambda_k(t-s)} + 1 - 2 e^{-\lambda_k(t-s)}\prod_{j=1}^d\cos(k_j (x_j-y_j) )\right)\notag\\
&\quad +\frac{1}{2^{n(k)+1}\pi^d}\sum_{k\in\N^{d,*}} \frac{1-e^{-2\lambda_k (t-s)}}{\lambda_k}
+\frac{t-s}{(2\pi)^d},
\end{align}
$0\le s\le t$.
This formula is proved using the Wiener isometry 
\begin{align}
\label{isometry-easy}
&\Vert u(t,x)-u(s,y)\Vert^2_{L^2(\Omega)} = \int_0^t dr \int_{\mathbb{T}^d} dz\ (G(t-r; x,z) - G(s-r; y,z))^2\notag\\
&\ = \int_0^s dr \int_{\mathbb{T}^d} dz\ (G(t-r; x,z) - G(s-r; y,z))^2 + \int_s^t dr \int_{\mathbb{T}^d} dz\ G^2(t-r; x,z),
\end{align}
(the last equality holds because the Green's function $G(r;y,z)$ vanishes if $r<0$) and  using the definition \eqref{eq1.2}.
The first (respectively, second) series term in \eqref{isometry} equals the first (respectively second) integral on the rignt-hand side of \eqref{isometry-easy}.

We start by analyzing the $L^2(\Omega)$-increments in the time variable of the process $(u(t,x))$.
\begin{prop}
\label{p3.1}
1.\ There exists constants $c_1(d,T)$ and $c_2(d)$ such that, for all  $s, t\in[0,T]$, $x\in \mathbb{T}^d$,
\beq
\label{tbd}
c_1(d,T)\vert t-s\vert^{1-d/4}\leq \Vert u(t,x)-u(s,x)\Vert^2_{L^2(\Omega)} \leq c_2(d)\vert t-s\vert^{1-d/4}.
\eeq
\noindent{2.}\ For any $(t,x), (s,y)\in [0,T]\times \mathbb{T}^d$,
\beq
\label{tbd-bis}
c_1(d,T)\vert t-s\vert^{1-d/4}\leq \Vert u(t,x)-u(s,y)\Vert^2_{L^2(\Omega)}, 
\eeq
where $c_1(d,T)$ is the same constant as in \eqref{tbd}.
\end{prop}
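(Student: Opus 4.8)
The plan is to compute $\Vert u(t,x)-u(s,y)\Vert^2_{L^2(\Omega)}$ using the Wiener isometry decomposition \eqref{isometry-easy}, and to extract from it the contribution that depends only on $|t-s|$. Assume without loss of generality that $s \le t$. The second integral on the right-hand side of \eqref{isometry-easy}, namely $\int_s^t dr \int_{\mathbb{T}^d} dz\, G^2(t-r;x,z)$, is precisely the quantity computed in \eqref{eq1.4} with $t$ replaced by $t-s$; by that formula it equals $\frac{t-s}{(2\pi)^d} + \sum_{k:0\le n(k)\le d-1}\frac{1-e^{-2\lambda_k(t-s)}}{2^{n(k)+1}\pi^d\lambda_k}$. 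Since the first integral in \eqref{isometry-easy} is nonnegative, this already yields the lower bound in both \eqref{tbd} and \eqref{tbd-bis}, provided I can show that this second integral is bounded below by $c_1(d,T)|t-s|^{1-d/4}$; and for the upper bound in part 1 when $x=y$, the first integral in \eqref{isometry-easy} (with $x=y$) must be controlled from above by a constant times $|t-s|^{1-d/4}$ as well, or better yet absorbed, so that the whole left-hand side is comparable to $|t-s|^{1-d/4}$.

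The heart of the matter is therefore the asymptotic behavior of $H(\tau) := \sum_{k\in\N^d, \, 0\le n(k)\le d-1}\frac{1-e^{-2\lambda_k \tau}}{2^{n(k)+1}\pi^d\lambda_k}$ as $\tau\downarrow 0$, and I expect this to behave like $\tau^{1-d/4}$ up to constants. The key step is to use $\frac{u}{1+u}\le 1-e^{-u}\le 1\wedge u$ together with the two-sided bound $d^{-1}|k|^4\le\lambda_k\le|k|^4$ to compare $H(\tau)$ with $\sum_{k}\frac{1-e^{-2|k|^4\tau}}{|k|^4}$ (sum over the relevant index set), and then to estimate the latter by comparison with the integral $\int_{\R^d}\frac{1-e^{-2|\xi|^4\tau}}{|\xi|^4}\,d\xi$. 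Rescaling $\xi = \tau^{-1/4}\zeta$ turns this into $\tau^{1-d/4}\int_{\R^d}\frac{1-e^{-2|\zeta|^4}}{|\zeta|^4}\,d\zeta$, and the constant integral is finite precisely when $d\le 3$ (convergent at infinity because of the $|\zeta|^{-4}$ decay, and convergent at the origin because $1-e^{-2|\zeta|^4}\sim 2|\zeta|^4$ there). I will need to be a little careful near $\tau$ of order $1$ to separate the genuinely small-$\tau$ behavior from the behavior at $\tau$ bounded away from $0$ — but since $\tau = t-s \le T$, the ratio $H(\tau)/\tau^{1-d/4}$ is continuous and strictly positive on $(0,T]$ and has a positive finite limit as $\tau\to 0$, hence is bounded above and below by positive constants depending on $d$ and $T$; the upper constant can in fact be taken independent of $T$ by monotonicity of $1-e^{-2\lambda_k\tau}$ in $\tau$ after factoring out $\tau^{1-d/4}$, or more directly by the bound $1-e^{-u}\le u$.

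For the upper bound in part 1, with $x=y$, I go back to \eqref{isometry} or \eqref{isometry-easy}: the first integral becomes $\int_0^s dr\int_{\mathbb{T}^d}dz\,(G(t-r;x,z)-G(s-r;x,z))^2$. Expanding via \eqref{eq1.2} and using that the $\varepsilon_{i,k}$ are orthonormal, this equals $\sum_{(i,k)}\varepsilon_{i,k}^2(x)\int_0^s(e^{-\lambda_k(t-r)}-e^{-\lambda_k(s-r)})^2\,dr = \sum_k \frac{1}{2^{n(k)+1}\pi^d}\cdot\frac{(1-e^{-2\lambda_k s})}{\lambda_k}(e^{-\lambda_k(t-s)}-1)^2\cdot\frac{1}{?}$ — concretely, after integrating in $r$ one gets a factor $\frac{1-e^{-2\lambda_k s}}{2\lambda_k}(1-e^{-\lambda_k(t-s)})^2$ up to the normalization, which is bounded by $\frac{(1-e^{-\lambda_k(t-s)})^2}{2\lambda_k}\le \frac{1-e^{-2\lambda_k(t-s)}}{2\lambda_k}$; summing over $k$ this is again of order $H(t-s)$, hence $O(|t-s|^{1-d/4})$ by the estimate above, plus the diagonal term $\frac{t-s}{(2\pi)^d}$ which is $O(|t-s|)\le O(|t-s|^{1-d/4})$ for $|t-s|\le T$. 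Adding the second integral, which we already showed is comparable to $|t-s|^{1-d/4}$, gives the two-sided bound \eqref{tbd}. The main obstacle is the rigorous sum-to-integral comparison giving the $\tau^{1-d/4}$ rate uniformly — in particular handling the anisotropy of $\lambda_k=\sum_j k_j^4$ versus the isotropic $|k|^4$, and treating separately the indices $k$ with some vanishing components (where $n(k)\ge 1$), since there the sum is effectively a lower-dimensional lattice sum and one must check it does not dominate; it does not, because such terms contribute $\sum_{k'\in\N^{d-n},*}\frac{1-e^{-2\lambda_{k'}\tau}}{\lambda_{k'}}$ which is $O(\tau^{1-(d-n)/4}) = o(\tau^{1-d/4})$ for $n\ge 1$, actually of smaller order, so they are harmlessly absorbed into the constants.
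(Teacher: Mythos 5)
Your proposal is correct and follows essentially the same route as the paper: the same Wiener-isometry decomposition \eqref{isometry-easy}, the same reduction of everything (lower bound, cross term, and part 2) to the two-sided estimate $\sum_{k}\lambda_k^{-1}\bigl(1-e^{-2\lambda_k\tau}\bigr)\asymp \tau^{1-d/4}$, with the cross term controlled via $(1-e^{-\lambda_k(t-s)})^2\le 1-e^{-2\lambda_k(t-s)}$ exactly as the paper does through Lemma \ref{s-a-l.1}. The only cosmetic difference is that you establish the series asymptotics by a sum-to-integral comparison and the rescaling $\xi=\tau^{-1/4}\zeta$, whereas the paper truncates the lattice sum at $|k|\sim (t-s)^{-1/4}$ and uses $1-e^{-u}\ge u/(1+u)$; both yield the same constants (including the harmless $T$-dependence coming from the diagonal term $(t-s)/(2\pi)^d$).
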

\begin{proof}  Without loss of generality, we suppose $0\le s< t\le T$. 
\smallskip

\noindent
Use the first equality in \eqref{isometry-easy} and then apply Lemma \ref{s-a-l.1} with $h:=t-s$. This yields the second inequality in \eqref{tbd}.
\smallskip

\noindent From \eqref{isometry}, we have
\begin{align}
\label{w1p}
\Vert u(t,x)-u(s,x)\Vert^2_{L^2(\Omega)}\ge \frac{1}{2^{n(k)+1}\pi^d} \sum_{k\in\mathbb{N}^{d,*}} \frac{1-e^{-2\lambda_k(t-s)}}{\lambda_k}. 
\end{align}

Let $r\ge d$. Applying the inequality $1-e^{-u}\geq \frac{u}{1+u}$, $u\geq 0$, we obtain
\begin{align*}
 &\sum_{k\in\mathbb{N}^{d,*}} \frac{1-e^{-2 \lambda_k(t-s)}}{\lambda_k} \ge 2(t-s) \sum_{\substack{k\in  \mathbb{N}^{d,*}\\ |k| > r} } \frac{1}{1+2 \lambda_k(t-s)}\\
 & \qquad \ge \frac{2(t-s)}{r^{-4} + 2(t-s)}\sum_{\substack{k\in  \mathbb{N}^{d,*}\\ |k| > r}} \frac{1}{|k|^4}
 = C_d\ \frac{2(t-s)}{r^{-4} + 2(t-s)} r^{d-4},
 \end{align*}
 since $\lambda_k\le |k|^4$.
 Choosing $r=\left(\tfrac{d^4 T}{t-s}\right)^{1/4}$, the inequality above yields
 \beqn
 \Vert u(t,x)-u(s,x)\Vert^2_{L^2(\Omega)} \ge c_1(d,T)(t-s)^{1-d/4},
 \eeqn
 with $ c_1(d,T) = C_d \frac{2d^d T^{d/4}}{1+2d^4 T}$. This is the lower bound in \eqref{tbd}.
 \smallskip
 
 Notice that from \eqref{isometry} we deduce
 \beqn
\Vert u(t,x)-u(s,y)\Vert^2_{L^2(\Omega)}\ge \frac{1}{2^{n(k)+1}\pi^d} \sum_{k\in\mathbb{N}^{d,*}} \frac{1-e^{-2\lambda_k(t-s)}}{\lambda_k}. 
\eeqn
Hence the proof above yields \eqref{tbd-bis}.
 \end{proof}
 
For any $j=1,\ldots,d$, fix real numbers $0<c_{0,j}<2\pi$ and define $J_j=[c_{0,j},2\pi-c_{0,j}]$ and $J= J_1\times \ldots \times J_d\subsetneq\mathbb{T}^d$. The next statement deals with increments in space. 

\begin{prop}
\label{prop2.2}  Let $(u(t,x),\ (t,x)\in [0,T]\times \mathbb{T}^d)$ be the stochastic process defined in Theorem \ref{t1.1} and let $J$ be a compact set as described before. There exists positive constants $c(d)$, $C(d)$, $c_3(d)$ and $c_4(d)$ such that, for any $t>0$, $x, y \in J$,
\begin{align}\label{sbd}
&c_3(d) C_t \left(\log\frac{c(d)}{\vert x-y\vert}\right)^\beta\vert x-y\vert^{2\wedge(4-d)}\notag\\
&\quad  \qquad \le \Vert u(t,x)-u(t,y)\Vert^2_{L^2(\Omega)} \leq c_4(d)\left(\log\frac{C(d)}{\vert x-y\vert}\right)^\beta\vert x-y\vert^{2\wedge(4-d)},
\end{align}
where $C_t = (1-e^{-2dt})$, and $\beta= 1_{\{d=2\}}$.

The upper bound holds for any $(t,x)\in[0,T]\times \mathbb{T}^d$. The lower bound holds for any $x,y\in\mathbb{T}^d$ if $|x-y|$ is small enough. For $t=0$, the lower bound is non informative.
\end{prop}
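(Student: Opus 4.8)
\textbf{Setup via the spectral representation.} The starting point is the identity \eqref{isometry} specialized to $t=s$, which gives
\begin{equation*}
\Vert u(t,x)-u(t,y)\Vert^2_{L^2(\Omega)} = \frac{1}{\pi^d}\sum_{k\in\N^{d,*}} \frac{1-e^{-2\lambda_k t}}{2^{n(k)+1}\lambda_k}\Bigl(1-\prod_{j=1}^d\cos(k_j(x_j-y_j))\Bigr).
\end{equation*}
The factor $1-e^{-2\lambda_k t}$ is sandwiched between $\tfrac{2\lambda_k t}{1+2\lambda_k t}$ and $1$, and since $\lambda_k\ge d$ on $\N^{d,*}$, for a \emph{fixed} $t>0$ it is bounded below by the constant $C_t=1-e^{-2dt}$ and above by $1$. (For the upper bound that is claimed uniformly in $t\in[0,T]$, one simply uses $1-e^{-2\lambda_k t}\le 1$.) So the whole problem reduces to estimating, both from above and below, the purely spatial series
\begin{equation*}
S(z):=\sum_{k\in\N^{d,*}} \frac{1}{2^{n(k)}\lambda_k}\Bigl(1-\prod_{j=1}^d\cos(k_j z_j)\Bigr),\qquad z=x-y,
\end{equation*}
and showing $S(z)\asymp (\log\tfrac{C}{|z|})^\beta |z|^{2\wedge(4-d)}$ with $\beta=1_{\{d=2\}}$, for $z$ small (lower bound) and for all $z$ (upper bound), uniformly over $x,y\in J$ — the restriction to $J$ being what keeps the arguments $k_jz_j$ away from the degenerate regime and lets us compare $z$ with the torus distance.

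\textbf{Upper bound.} Use the elementary telescoping inequality $1-\prod_{j=1}^d\cos(k_jz_j)\le \sum_{j=1}^d (1-\cos(k_jz_j))\le \tfrac12\sum_{j=1}^d k_j^2 z_j^2$, combined with the crude bound $1-\prod\cos\le 2$. Split $\N^{d,*}$ according to whether $|k|\le 1/|z|$ or $|k|>1/|z|$. On the low-frequency part, bound $1-\prod\cos$ by $\tfrac12 |z|^2|k|^2$ and use $\lambda_k\ge d^{-1}|k|^4$ to get $\lesssim |z|^2\sum_{1\le|k|\le 1/|z|}|k|^{-2}$, whose growth in $1/|z|$ is: bounded ($d=1$), logarithmic ($d=2$), like $(1/|z|)^{d-2}$ ($d=3$). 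On the high-frequency part, bound $1-\prod\cos$ by $2$ and use $\lambda_k\ge d^{-1}|k|^4$ to get $\lesssim \sum_{|k|>1/|z|}|k|^{-4}\asymp (1/|z|)^{d-4}=|z|^{4-d}$. Assembling the two pieces: for $d=1$ both give $\lesssim|z|^2$; for $d=3$ both give $\lesssim|z|$; for $d=2$ the first gives $|z|^2\log(1/|z|)$ and the second gives $|z|^2$, so the log term dominates. This produces the right-hand inequality in \eqref{sbd} with the exponent $2\wedge(4-d)$ and $\beta=1_{\{d=2\}}$. (I expect this direction to be essentially bookkeeping.)

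\textbf{Lower bound — the main obstacle.} For the lower bound one cannot just keep one favorable term; the cosine product can oscillate in sign. The plan is to restrict the sum to a carefully chosen set of $k$ — the natural choice is the sector where, say, $k_1 \asymp 1/|z|$ (more precisely $|k_1 z_1|$ lies in a fixed favorable sub-interval of $(0,\pi)$ where $1-\cos$ is bounded below) while the other components $k_2,\dots,k_d$ are $O(1)$ or ranging so that $\prod_{j\ge 2}\cos(k_jz_j)$ stays bounded away from $1$ in a controlled way — and to show the subsum is already of the right order. Concretely, using $1-\prod_j\cos(k_jz_j)\ge 1-\cos(k_1z_1) - \sum_{j\ge2}|1-\cos(k_jz_j)|$ after a telescoping/triangle step, one isolates a block of indices contributing $\gtrsim \lambda_k^{-1}\asymp |k|^{-4}$ each, over a set of $k$'s of cardinality $\asymp (1/|z|)^{d-1}$ lying at scale $|k|\asymp 1/|z|$ (for $d=1,3$), giving $\gtrsim (1/|z|)^{d-1}\cdot |z|^4 = |z|^{5-d}$ — which is the wrong power. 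So for $d=3$ the dominant contribution must instead come from the high-frequency tail $|k|\gtrsim 1/|z|$ where one needs an \emph{averaging} argument: integrate the identity, or rather exploit that for $k$ in a dyadic shell $|k|\sim 2^m \gg 1/|z|$ the average of $1-\prod\cos(k_jz_j)$ over the shell is bounded below by a constant (since the phases equidistribute), so the shell contributes $\gtrsim 2^{md}\cdot 2^{-4m}=2^{m(d-4)}$ and summing the shells down to $m_0$ with $2^{m_0}\sim 1/|z|$ gives $\gtrsim |z|^{4-d}$, matching the upper bound. For $d=2$ the borderline logarithm must be recovered by summing over the full range of shells $1\lesssim 2^m\lesssim 1/|z|$ of the low-frequency contribution $\sum_{|k|\sim 2^m}|z|^2|k|^2/|k|^4 \asymp |z|^2$ per shell, $\log(1/|z|)$ shells, again via an equidistribution lower bound for $\sum_{|k|\sim 2^m}(1-\prod\cos(k_jz_j))$ that is uniform in the shell. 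The technical heart of the proof is therefore a two-sided estimate on the shell sums $\sum_{|k|\sim 2^m}\bigl(1-\prod_{j}\cos(k_j z_j)\bigr)$ that is robust to the oscillation of the cosines; the compactness of $J$ (hence $|z_j|\le \text{diam}$ and $z$ comparable to the torus distance) and the standard trick of replacing $1-\cos$ by $\sin^2(\cdot/2)$ and using $\sum_{a\le k\le 2a}\sin^2(k\theta/2)\ge c\, a$ whenever $a\theta \ge 1$ are what make this work. Finally, one combines the spatial estimate with the factor $C_t$ (for the lower bound, which is why $t=0$ is non-informative) and with $\tfrac1{2^{n(k)+1}\pi^d}\ge \tfrac1{2^{d+1}\pi^d}$ to absorb the $n(k)$ weights into constants, yielding \eqref{sbd}.
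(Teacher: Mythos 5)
Your upper bound is essentially the paper's argument (split at frequency $|k|\sim|z|^{-1}$, bound $1-\prod_j\cos(k_jz_j)$ by $\min\bigl(C,\,C|k|^2|z|^2\bigr)$, compare with integrals), so that direction is fine.

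The lower bound, however, contains a genuine gap, and it originates in an arithmetic slip. You evaluate the low-frequency contribution by keeping only a block of indices \emph{at the single scale} $|k|\asymp 1/|z|$, get $|z|^{5-d}$, conclude this is "the wrong power", and are therefore driven to a high-frequency equidistribution argument. But the correct low-frequency computation sums over \emph{all} shells $1\lesssim|k|\lesssim 1/|z|$: using $1-\cos\theta\ge (2\theta/\pi)^2$ for $|\theta|\le\pi/2$ one gets
\begin{equation*}
\sum_{\substack{k\in\N^{d,*}\\ |k|\le c/|z|}}\frac{1-\prod_j\cos(k_jz_j)}{\lambda_k}
\;\gtrsim\; |z|^2\sum_{\substack{k\in\N^{d,*}\\ |k|\le c/|z|}}\frac{1}{|k|^{2}}
\;\asymp\;
\begin{cases} |z|^2, & d=1,\\ |z|^2\log(1/|z|), & d=2,\\ |z|, & d=3,\end{cases}
\end{equation*}
which is exactly the claimed lower bound; no high-frequency tail is needed. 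This is the route the paper takes. The nontrivial points there are (i) controlling the cross terms in $1-\prod_j(1-p_j)\ge\sum_jp_j-\sum_{i<j}p_ip_j$, handled by restricting to $k_j|z_j|\le\pi/4$ so that $S_1-(2/\pi)^2S_2\ge\tfrac34S_1$, and (ii) the inequality $\sum_jk_j^2z_j^2\ge c_d|k|^2|z|^2$, which is \emph{false} in general (take $z=(z_1,0)$, $k=(1,k_2)$ with $k_2$ large) and is rescued by ordering $|z_1|\le\cdots\le|z_d|$ and restricting to the sector $k_1\le\cdots\le k_d$, where it follows from a scalar-product/angle argument. Your proposed replacement — a uniform lower bound on the shell averages of $1-\prod_j\cos(k_jz_j)$ for $|k|\sim2^m$ — is precisely where this anisotropy bites: when some coordinates $z_j$ vanish the product degenerates to a lower-dimensional one, dyadic shells do not factorize into products of intervals, and the one-dimensional bound $\sum_{a\le k\le 2a}\sin^2(k\theta/2)\ge ca$ requires $a|\sin(\theta/2)|\gtrsim1$ coordinatewise, which you cannot guarantee for every $j$. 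You assert this step rather than prove it, and it is the technical heart of your plan.

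Two further omissions: you never address how the lower bound, proved for $|x-y|$ small, extends to all $x,y\in J$ (the paper uses a continuity–compactness argument on $\varphi_t(x,y)=\Vert u(t,x)-u(t,y)\Vert^2_{L^2(\Omega)}$, which is continuous by the upper bound and strictly positive, over the compact set $\{|x-y|\ge c_0\}$); and your claimed pointwise inequality $1-\prod_j\cos(k_jz_j)\ge 1-\cos(k_1z_1)-\sum_{j\ge2}|1-\cos(k_jz_j)|$ would in any case need the sign of $\cos(k_1z_1)$ controlled, which again forces a restriction of the index set of the type the paper imposes.
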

\begin{proof}

\noindent{\em Upper bound.} \  From \eqref{isometry} we deduce
\begin{align}
\label{basic1}
 \Vert u(t,x)-u(t,y)\Vert^2_{L^2(\Omega)} &=  \frac{1}{2^{n(k)}\pi^d} \sum_{k\in\N^{d,*}} \frac{1-e^{-2\lambda_k t}}{\lambda_k}
\left(1 - \prod_{j=1}^d\cos(k_j (x_j-y_j) )\right).
\end{align}
Observe also that, because of \eqref{basis}, 
\beq
\label{meanvalue}
1-\prod_{j=1}^d \cos(k_j(x_j-y_j)) = 2^{n(k)-1}\pi^d \sum_{i\in\Z_2^d} (\varepsilon_{i,k}(x) - \varepsilon_{i,k}(y))^2\le \bar C(d)(1\wedge(\vert k\vert\  \vert x-y\vert)^2).
\eeq
 for any $(i,k)\in(\Z_2\times \N)^d$.

\noindent{\em Case $d=1$.}\  Since $\sum_{k\ge 1}\frac{1}{k^2}< \infty$, from \eqref{basic1} and \eqref{meanvalue} we have
\beq
\label{d=1}
\Vert u(t,x)-u(t,y)\Vert^2_{L^2(\Omega)}= \frac{1}{\pi}\sum_{k\ge 1} \frac{1-e^{-2\lambda_kt}}{\lambda_k}(1-\cos(k(x-y))) 
\le C_d |x-y|^2.
\eeq
\smallskip

\noindent{\em Case $d=2,3$}.\ For any $k\in \N^{d}$, let $I_k=[k_1,k_1+1)\times \cdots\times[k_d,k_d+1)$. Observe that for any $d$-dimensional vector $z\in I_k$, 
we have $|z| \le |k| + \sqrt d$. Fix $\rho_0\ge \lfloor 3\sqrt d\rfloor + 1$ and let $\alpha>0$. Then,
  \begin{align}
\label{away}
T_1(\alpha,\rho_0)&:=\sum_{\substack{k\in  \mathbb{N}^{d}\\ |k| \ge \rho_0}} \frac{1}{|k|^\alpha}  \le  \sum_{\substack{k\in  \mathbb{N}^{d}\\ |k| \ge \rho_0}} \int_{I_k} \frac{dz}{(|z|-\sqrt d)^\alpha}
 \le  C_d\int_{\rho_0}^\infty \frac{\rho^{d-1}\ d\rho}{(\rho-\sqrt d)^\alpha}\notag\\
&\le  C_{d,\alpha} \int_{\rho_0}^\infty \rho^{d-1-\alpha}\ d\rho,
 \end{align}
where the last inequality holds because on $[\rho_0,\infty)$,  $\rho-\sqrt d\ge 1/2\rho$. 

Let $\rho_0$ be as above, $\rho_1 = \left\lfloor(3/2)\sqrt d\right\rfloor + 1$, and  $\beta>0$. 
By arguments similar to those used to obtain \eqref{away}, we deduce
\begin{align}
\label{close}
T_2(\beta,\rho_0)&= \sum_{\substack{k\in  \mathbb{N}^{d}\\ \rho_1\le |k| <\rho_0}} \frac{1}{|k|^\beta}   \le  \sum_{\substack{k\in  \mathbb{N}^{d}\\ \rho_1\le |k| <\rho_0}} \int_{I_k} \frac{dz}{(|z|-\sqrt d)^\beta}
\le  C_d\int_{\rho_1}^{\rho_0}\frac{\rho^{d-1}\ d\rho}{(\rho-\sqrt d)^\beta}\notag\\
& \le  C_{d,\beta} \int_{\rho_1}^{\rho_0} \rho^{d-1-\beta}\ d\rho, 
 \end{align}
 where in the last inequality, we have used that on $[\rho_1,\rho_0]$, $\rho-\sqrt d\ge (1/5)\rho$.
 
 Set $h= |x-y|$ and $\rho_0 = \left\lfloor c_d h^{-\frac{2\wedge (4-d)}{4-d}}\right\rfloor +1$, where $c_d= 3\sqrt d (2\pi\sqrt d)^{\frac{2\wedge (4-d)}{4-d}}$. Notice that $\rho_0\ge \lfloor 3\sqrt d\rfloor + 1$. Then, from \eqref{basic1} we have
 \beq
 \label{join}
  \Vert u(t,x)-u(t,y)\Vert^2_{L^2(\Omega)} \le C(d)\left[T_1(4,\rho_0)+ h^2 \left(T_2(2,\rho_0) +  \sum_{\substack{k\in  \mathbb{N}^{d}\\ 1\le |k| <\rho_1}} \frac{1}{|k|^2}\right)\right].
  \eeq 
 Using \eqref{away}, with the choice of $\rho_0$ specified above, we see that $T_1(4,\rho_0)\le C_d h^{2\wedge(4-d)}$ and 
 \beqn
 T_2(2,\rho_0) \le C_d\times
 \begin{cases}
 \log\left(\frac{C}{h}\right),& \ d=2,\\
 h^{-1},&\ d=3.
 \end{cases}
 \eeqn 
 Since $ \sum_{k\in  \mathbb{N}^{d},\ 1\le |k| <\rho_1} \frac{1}{|k|^2}=\tilde c_d <\infty$, substituting the above estimates in the right-hand side of \eqref{join} we obtain the  upper bound in \eqref{sbd}.
\smallskip

\noindent{\em Lower bound.\  Case $|x-y|$ small.}\ We start from \eqref{basic1} to obtain
\beq
\label{lower1}
 \Vert u(t,x)-u(t,y)\Vert^2_{L^2(\Omega)} \ge   \frac{1-e^{-2t}}{ 2^{n(k)}\pi^d} \sum_{k\in\N^{d,*}} \frac{1 - \prod_{j=1}^d\cos(k_j (x_j-y_j) )}{|k|^4}.
\eeq
Let $T(x,y)$ denote the series on the right-hand side of \eqref{lower1}. Because for any  $z\in[-\pi/2,\pi/2]$, we have $\cos z \le 1-(\tfrac{2}{\pi}z)^2$, we deduce
\beq
\label{lowerT}
T(x,y)\ge  \sum_{\substack{k\in  \mathbb{N}^{d,*}\\ k_j|x_j-y_j|\le \pi/2}} \frac{1-\prod_{j=1}^d(1-[(2/\pi)k_j\vert x_j-y_j\vert ]^2)}{\vert k\vert^4 }.
\eeq

\noindent{\em Case $d=1$.}\  Using \eqref{lowerT}, we obtain
\begin{align*} 
T(x,y)&\ge (\tfrac{2}{\pi})^2 |x-y|^2 \sum_{\substack{k\in \N\setminus\{0\}\\ k|x-y|\le \pi/2}} \frac{1}{k^2}
\ge (\tfrac{2}{\pi})^2 |x-y|^2 \int_1^{\frac{\pi}{2}|x-y|^{-1}} \rho^{-2}\ d\rho\\
& = (\tfrac{2}{\pi})^2 |x-y|^2\left(1-\frac{2}{\pi}|x-y|\right).
\end{align*}
Assume $|x-y|\le \tfrac{c_0\pi}{2}$, with $0< c_0<1$ arbitrarily close to $1$. Then $1-\frac{2}{\pi}|x-y|\ge 1-c_0$ and, in this case,
\beq
\label{1d-small}
 \Vert u(t,x)-u(t,y)\Vert^2_{L^2(\Omega)} \ge  4 (1-c_0)\frac{1-e^{-2t}}{\pi^3} |x-y|^2.
\eeq
\smallskip 

\noindent{\em Case $d=2,3$.}\  Consider the series on the right-hand side of \eqref{lowerT} and  apply the formula \eqref{inclusion-exclusion} of Lemma \ref{s-a-l.2} with $m:=d$ and $p_j=[(2/\pi) k_j|x_j-y_j|]^2$, to see that
\beq
\label{lower2}
T(x,y)\ge (2/\pi)^2 \left[S_1(x,y) - (2/\pi)^2 S_2(x,y)\right],
\eeq
where
\begin{align*}
S_1(x,y)&=\sum_{\substack{ k\in \mathbb{N}^{d,*} \\k_j\vert x_j-y_j\vert\leq\pi/4}}\sum_{j=1}^d\frac{(k_j\vert x_j-y_j\vert)^2}{\vert k\vert^4 },\\
S_2(x,y)&= \sum_{\substack{ k\in \mathbb{N}^{d,*} \\k_j\vert x_j-y_j\vert\leq\pi/4}}
\sum_{\substack{j_1,j_2\in\{1,\ldots,d\},\\j_1< j_2}}\frac{(k_{j_1}\vert x_{j_1}-y_{j_1}\vert k_{j_2}\vert x_{j_2}-y_{j_2}\vert)^2}{\vert k\vert^{4}}.
\end{align*}
Note that the condition $k_j\vert x_j-y_j\vert\leq\pi/4$ implies $1-(2/\pi)^2(k_j\vert x_j-y_j\vert)^2\ge 3/4$. Hence, for $d=2$ we see that
\begin{align*}
&\sum_{j=1}^2 (k_j|x_j-y_j|)^2 - (2/\pi)^2(k_1|x_1-y_1|)^2(k_2|x_2-y_2|)^2\\
& \quad= (k_1|x_1-y_1|)^2\left(1- (2/\pi)^2(k_2|x_2-y_2|)^2\right) + 
(k_2|x_2-y_2|)^2\\
&\quad\ge\frac{3}{4} \sum_{j=1}^2 (k_j|x_j-y_j|)^2.
\end{align*}
Similarly, for $d=3$ we have
\begin{align*}
\sum_{j=1}^3 (k_j|x_j-y_j|)^2 \left(1-(2/\pi)^2(k_{j+1}|x_{j+1}-y_{j+1})^2\right)
\ge\frac{3}{4} \sum_{j=1}^3 (k_j|x_j-y_j|)^2,
\end{align*}
where in the sum above, we set $j+1=1$ if $j=3$.

Thus, in both dimensions $d=2, 3$,
\beqn
S_1(x,y)-(2/\pi)^2 S_2(x,y) \ge (3/4)S_1(x,y).
\eeqn

The next goal is to find a lower bound for $S_1(x,y)$. Without loss of generality we may and will assume  $\vert x_1-y_1\vert\leq\vert x_2-x_2\vert\leq...\leq\vert x_d-y_d\vert$.
Set $\mathbb{N}^{d,*}_{\leq}:=\{k\in\mathbb{N}^{d,\ast}:k_1\leq k_2\leq...\leq k_d\}$. Then,
\begin{align}
\label{lower3}
S_1(x,y)\ge \sum_{\substack{k\in \mathbb{N}^{d,*}_{\leq} \\k_j\vert x_j-y_j\vert\leq\pi/4}}\sum_{j=1}^d\frac{(k_j\vert x_j-y_j\vert)^2}{\vert k\vert^4 }
\ge \frac{1}{\sqrt 2 d}\ \vert x-y\vert^2\sum_{\substack{ k\in \mathbb{N}^{d,*}_{\leq}\\ k_j\vert x_j-y_j\vert\leq\pi/4}}\frac{1}{\vert k\vert^2}.
\end{align}
Indeed, set $K=(k_j^2)_j$, $Z=(|x_j-y_j|^2)_j$ and let $\xi$ be the angle between the vectors $K$ and $Z$. Because 
$\sum_{j=1}^d (k_j\vert x_j-y_j\vert)^2$ is the Euclidean scalar product between $K$ and $Z$ and $\xi \in [0,\pi/4]$,
\beqn
\sum_{j=1}^d (k_j\vert x_j-y_j\vert)^2\ge \cos(\pi/4) \left(\sum_{j=1}^d k_j^4\right)^{1/2} \left(\sum_{j=1}^d |x_j-y_j|^4\right)^{1/2}\ge
\frac{1}{\sqrt 2}\frac{|k|^2|x-y|^2}{d}.
\eeqn
Assume that $|x-y|\le \frac{\pi}{5\sqrt d}$. The set $\{k\in\N^{d,*}: |k|\le \frac{\pi}{4}|x-y|^{-1}\}$ is non empty and is included in $\{k\in\N^{d,*}: k_j\le \frac{\pi}{4}|x_j-y_j|^{-1}, j=1,\ldots,d\}$. Hence,
\beqn
\sum_{\substack{ k\in \mathbb{N}^{d,*}_{\leq}\\ k_j\vert x_j-y_j\vert\leq\pi/4}}\frac{1}{\vert k\vert^2} \ge \frac{1}{d!}
\sum_{\substack{ k\in \mathbb{N}^{d,*}\\  |k|\le\frac{\pi}{4}|x-y|^{-1}}} \frac{1}{|k|^2} \ge C_d\int_{\sqrt d}^{\frac{\pi}{4}|x-y|^{-1}}\rho^{d-3} \ d\rho.
\eeqn
For $d=2$, the last integral equals $\log\left(\frac{\pi}{4\sqrt d |x-y|}\right)$, while for $d=3$, it is equal to $(\pi/4) |x-y|^{-1}-\sqrt d$. Observe that if $|x-y|\le \frac{\pi}{5\sqrt d}$ this expression is bounded below by $(\pi/20)|x-y|^{-1}$.

Summarizing, from \eqref{lower3} and assuming $|x-y|\le \frac{\pi}{5\sqrt d}$, the discussion above proves 
\beq
\label{lower4}
S_1(x,y)\ge C_d \times \begin{cases}
\log\left(\frac{\pi}{4\sqrt d |x-y|}\right)|x-y|^2, & d=2,\\
|x-y|, & d=3.
\end{cases}
\eeq
Therefore, for any $x,y\in\mathbb{T}^d$ such that $0\le |x-y|\le \frac{\pi}{5\sqrt d}$, we have proved that the lower bound of \eqref{sbd} holds with the constant $c_3(d)$ depending only on $d$ and $C_t = 1-e^{-2t}$.
\smallskip

\noindent{\em Lower bound.\  Case $|x-y|$ large.} We recall  a standard ``continuity-compactness'' argument that we will use  to extend the validity of the lower bound established in the previous step, to every $x,y\in J$ satisfying $\frac{\pi}{5\sqrt d}<|x-y|<2\pi$.  

Consider the function 
\beqn
J^2\ni (x,y) \mapsto \varphi_t(x,y) = \Vert u(t,x)-u(t,y)\Vert^2_{L^2(\Omega)},
\eeqn
where $t>0$ is fixed. Because of the upper bound in \eqref{sbd}, this is a continuous function. Furthermore, from \eqref{basic1}, we see that it is strictly positive. Thus, for any $c_0>0$, the minimun value $m$ of $\varphi_t$ over the compact set $\{\varphi_t(x,y); (x,y)\in J^2: |x-y|\ge c_0\}$ is achieved, and $m>0$. Referring to the left hand-side of \eqref{sbd}, let $M$ be the maximum of the function 
\beqn
J^2\ni (x,y) \mapsto \left(\log\frac{c(d)}{|x-y|}\right)^\beta\ |x-y|^{2\wedge(4-d)},\quad \beta=1_{\{d=2\}}.
\eeqn
Taking $c_0 = \tfrac{\pi}{5\sqrt d}$, we deduce,
\beqn
\Vert u(t,x)-u(t,y)\Vert^2_{L^2(\Omega)}\ge \frac{m}{M}\left(\log\frac{c(d)}{|x-y|}\right)^\beta\ |x-y|^{2\wedge(4-d)},\quad \beta=1_{\{d=2\}},
\eeqn
for any $x,y\in J$ such that $\frac{\pi}{5\sqrt d}<|x-y|<2\pi$.

This ends the proof of the lower bound and of the Proposition.
 \end{proof}
  
 With Propositions \ref{p3.1} and \ref{prop2.2} we obtain an equivalent expression of the canonical pseudo-distance \eqref{d}, as stated in the next theorem. 
 
  \begin{teo}\label{teo2.1}\ Let $(u(t,x),\ (t,x)\in[0,T]\times \mathbb{T}^d)$ be the stochastic process defined in Theorem \ref{t1.1}.
 
 \noindent  1.\ There exist constants $c_5(d)$, $C(d)$ such that for any $(t,x),(s,y)\in[0,T]\times\mathbb{T}^d$, 
 \beq
 \label{eq2.14-upper}
 \Vert u(t,x)-u(s,y)\Vert_{L^2(\Omega)}^2\le c_5(d) \left(\vert t-s\vert^{1-d/4}+\left(\log\frac{C(d)}{\vert x-y\vert}\right)^\beta\vert x-y\vert^{2\wedge(4-d)}\right),
 \eeq
 with $\beta=1_{\{d=2\}}$.
 
 \noindent 2.\ 
 Fix $t_0\in(0,T]$ and let $J$ be a compact subset of $\mathbb{T}^d$ as in Proposition \ref{prop2.2}. There exist constants $c_6(d, t_0,T)$ and $c(d)$
 such that, for any $(t,x),(s,y)\in[t_0,T]\times J, 
 $\begin{align}
\label{eq2.1.14-lower}
\Vert u(t,x)-u(s,y)\Vert_{L^2(\Omega)}^2&\ge c_6(d,t_0,T)\notag\\
&\qquad\times \left(\vert t-s\vert^{1-d/4}+\left(\log\frac{c(d)}{\vert x-y\vert}\right)^\beta\vert x-y\vert^{2\wedge(4-d)}\right),
\end{align}
with $\beta=1_{\{d=2\}}$.
\end{teo}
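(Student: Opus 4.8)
The plan is to obtain Theorem \ref{teo2.1} by combining the one-variable estimates of Propositions \ref{p3.1} and \ref{prop2.2} via the triangle inequality for the canonical pseudo-distance $\mathfrak{d}_u$, using the standard trick of inserting the intermediate point $(s,x)$ (or $(t,y)$) between $(t,x)$ and $(s,y)$. Concretely, for the upper bound \eqref{eq2.14-upper} I would write
\begin{align*}
\Vert u(t,x)-u(s,y)\Vert_{L^2(\Omega)}
&\le \Vert u(t,x)-u(s,x)\Vert_{L^2(\Omega)} + \Vert u(s,x)-u(s,y)\Vert_{L^2(\Omega)},
\end{align*}
square both sides, use $(a+b)^2\le 2a^2+2b^2$, and then bound the first term by the right inequality in \eqref{tbd} and the second by the upper bound in \eqref{sbd} (which, as Proposition \ref{prop2.2} records, is valid for all $(s,x)\in[0,T]\times\mathbb{T}^d$ without the compactness restriction). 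Since $C_s\le 1$, the $t$-dependent prefactor in \eqref{sbd} is harmless. This immediately yields \eqref{eq2.14-upper} with $c_5(d)$ a suitable multiple of $\max(c_2(d),c_4(d))$; a minor point is that for $|x-y|$ close to $2\pi$ one should absorb the bounded quantity $(\log\frac{C(d)}{|x-y|})^\beta|x-y|^{2\wedge(4-d)}$ into a constant, but this is routine.

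For the lower bound \eqref{eq2.1.14-lower} the key observation is that it suffices to bound $\mathfrak{d}_u^2$ below by a constant times the \emph{maximum} of the two gauge terms $|t-s|^{1-d/4}$ and $(\log\frac{c(d)}{|x-y|})^\beta|x-y|^{2\wedge(4-d)}$, since a maximum is comparable (up to the factor $2$) to a sum. The time part is already done: \eqref{tbd-bis} in Proposition \ref{p3.1} gives $\mathfrak{d}_u^2\ge c_1(d,T)|t-s|^{1-d/4}$ for \emph{all} $(t,x),(s,y)$. For the space part I would use \eqref{isometry} to drop the genuinely time-separated contributions and retain only the piece coming from the $s$-integral; more precisely, restricting attention to $s,t\ge t_0$ and $x,y\in J$, I would show that $\mathfrak{d}_u((t,x),(s,y))^2$ dominates a constant multiple of $\Vert u(s,x)-u(s,y)\Vert_{L^2(\Omega)}^2$, and then invoke the lower bound in \eqref{sbd} together with the compactness extension already proved there. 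Taking $c_6(d,t_0,T)=\tfrac12\min$ of $c_1(d,T)$ and (the relevant multiple of) $c_3(d)(1-e^{-2dt_0})$ then gives \eqref{eq2.1.14-lower}.

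The one genuine obstacle is the middle step for the lower bound: showing that $\mathfrak{d}_u((t,x),(s,y))^2\gtrsim \Vert u(s,x)-u(s,y)\Vert_{L^2(\Omega)}^2$, i.e. that going from $(t,x)$ to $(s,y)$ is at least a constant times as costly as the pure space move $(s,x)\to(s,y)$. The triangle inequality points the wrong way here, so I would instead go back to \eqref{isometry}. Writing $h=t-s\ge 0$, the first series in \eqref{isometry} is
\[
\frac{1}{2^{n(k)+1}\pi^d}\sum_{k}\frac{1-e^{-2\lambda_k s}}{\lambda_k}\Bigl(e^{-2\lambda_k h}+1-2e^{-\lambda_k h}\textstyle\prod_j\cos(k_j(x_j-y_j))\Bigr),
\]
and one checks the pointwise inequality $e^{-2\lambda_k h}+1-2e^{-\lambda_k h}c \ge e^{-2\lambda_k h}(2-2c)\ge e^{-2\lambda_k T}\,(1-c)$ valid for $c=\prod_j\cos(\cdots)\le 1$ (using $e^{-2\lambda_k h}+1\ge 2e^{-\lambda_k h}$). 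Since $\lambda_k\le|k|^4$ and, for $x,y\in J$ with $|x-y|$ small, the dominant terms in the space series are those with $|k|\lesssim|x-y|^{-1}$, on that range $e^{-2\lambda_k T}$ is bounded below by a positive constant depending on $T$ and on the (fixed) cutoff, so one recovers a constant multiple of the series $\sum_k \frac{1-e^{-2\lambda_k s}}{\lambda_k}(1-\prod_j\cos(k_j(x_j-y_j)))$, i.e. of $\Vert u(s,x)-u(s,y)\Vert_{L^2(\Omega)}^2$ as in \eqref{basic1} (with $t$ replaced by $s\ge t_0$). For $|x-y|$ not small, the compactness argument of Proposition \ref{prop2.2} again takes over. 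Care is needed to keep all constants uniform over $[t_0,T]$, which is exactly where the restriction $t,s\ge t_0$ enters.
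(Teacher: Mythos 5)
Your upper bound argument is fine and coincides with the paper's (triangle inequality plus the upper bounds in \eqref{tbd} and \eqref{sbd}). The problem is in the middle step of your lower bound. The chain
\[
e^{-2\lambda_k h}+1-2e^{-\lambda_k h}c \;\ge\; e^{-2\lambda_k h}(2-2c)\;\ge\; e^{-2\lambda_k T}(1-c)
\]
is correct as a string of inequalities, but it leaves you with the factor $e^{-2\lambda_k T}$, which decays like $e^{-2|k|^4 T/d}$. Your claim that this factor is ``bounded below by a positive constant'' on the relevant range $|k|\lesssim |x-y|^{-1}$ is false: to recover the gauge $|x-y|^{2\wedge(4-d)}$ (and the logarithm when $d=2$) you must sum up to $|k|$ of order $|x-y|^{-1}$, and there $e^{-2\lambda_k T}\le e^{-c|x-y|^{-4}T}$ is astronomically small as $|x-y|\downarrow 0$. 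As written, the argument only controls a fixed finite block of modes and cannot produce the stated lower bound. The good news is that the pointwise inequality you actually need is true with an \emph{absolute} constant: writing $a=e^{-\lambda_k h}\in(0,1]$ and $c=\prod_j\cos(k_j(x_j-y_j))\in[-1,1]$, one has $a^2+1-2ac=(1-a)^2+2a(1-c)\ge\tfrac18(1-c)$ (if $a\ge 1/2$ the second term already gives $1-c$; if $a<1/2$ then $(1-a)^2>1/4\ge (1-c)/8$ since $1-c\le 2$). With this replacement, the first series in \eqref{isometry} dominates a constant multiple of $\Vert u(s,x)-u(s,y)\Vert^2_{L^2(\Omega)}$ uniformly in $t,s$, and the rest of your plan (combine with \eqref{tbd-bis} and the lower bound in \eqref{sbd} for $s\ge t_0$, $x,y\in J$) goes through.

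Once repaired, your route is genuinely different from the paper's and somewhat more direct. The paper never proves $\mathfrak{d}_u((t,x),(s,y))^2\gtrsim \Vert u(t,x)-u(t,y)\Vert^2_{L^2(\Omega)}$; instead it splits into two cases according to whether $c_2(d)|t-s|^{1-d/4}$ is at most, or exceeds, $\tfrac{c_3(d)C_{t_0}}{4}\bigl(\log\tfrac{c(d)}{|x-y|}\bigr)^\beta|x-y|^{2\wedge(4-d)}$. In the first case it uses the reverse triangle inequality $\Vert u(t,x)-u(s,y)\Vert^2\ge \tfrac12\Vert u(t,x)-u(t,y)\Vert^2-\Vert u(t,y)-u(s,y)\Vert^2$ together with the upper bound in \eqref{tbd} to absorb the time increment; in the second case it invokes \eqref{tbd-bis} alone. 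That version needs no new estimate on the series, at the cost of a case analysis; your version isolates a cleaner structural fact (the mixed increment dominates the pure space increment) but requires getting the pointwise inequality right.
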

\begin{proof} 
The estimate from above follows by applying the triangle inequality and the upper bounds in \eqref{tbd} and \eqref{sbd}, which hold for any $(t,x),(s,y)\in[0,T]\times\mathbb{T}^d$. The value of the multiplicative constant in the upper bound is $c_5(d)=2[c_2(d)+c_4(d)]$, where $c_2(d)$, $c_4(d)$ are given in \eqref{tbd}, \eqref{sbd}, respectively.

To prove the lower bound, we consider two cases (see Propositions \ref{p3.1} and \ref{prop2.2} for the notations of the constants).

\noindent{\em Case 1}: $c_2(d) |t-s|^{1-d/4}\le \frac{c_3(d)C_{t_0}}{4} \left(\log\frac{c(d)}{|x-y|}\right)^\beta |x-y|^{2\wedge(4-d)}$, where $C_{t_0}= 1-e^{-2t_0}$. 

Applying the triangle inequality and then, using the lower bound in \eqref{sbd} and the upper bound in \eqref{tbd} we obtain,
\begin{align*}
\Vert u(t,x)-u(s,y)&\Vert_{L^2(\Omega)}^2\geq\frac{1}{2}\Vert u(t,x)-u(t,y)\Vert_{L^2(\Omega)}^2-\Vert u(t,y)-u(s,y)\Vert_{L^2(\Omega)}^2\\
&\geq\frac{c_3(d) C_{t_0}}{2}\left(\log\frac{c(d)}{\vert x-y\vert}\right)^\beta\vert x-y\vert^{2\wedge(4-d)}-c_2(d)\vert t-s\vert^{1-d/4}\\
&\geq\frac{c_3(d) C_{t_0}}{8}\left(\log\frac{c(d)}{\vert x-y\vert}\right)^\beta\vert x-y\vert^{2\wedge(4-d)}+\frac{c_2(d)}{2}\vert t-s\vert^{1-\frac{d}{4}}.
\end{align*}

\noindent{\em Case 2}: $c_2(d) |t-s|^{1-d/4}> \frac{c_3(d)C_{t_0}}{4} \left(\log\frac{c(d)}{|x-y|}\right)^\beta |x-y|^{2\wedge(4-d)}$. 

By \eqref{tbd-bis}, we have
\begin{align*}
&\Vert u(t,x)-u(s,y)\Vert_{L^2(\Omega)}^2\geq c_1(d,T)\vert t-s\vert^{1-d/4}= \frac{c_1(d,T)}{c_2(d)}\left[c_2(d)\vert t-s\vert^{1-d/4}\right]\\
&\quad\geq\frac{c_1(d,T)}{c_2(d)}\left(\frac{c_2(d)}{2}\vert t-s\vert^{1-d/4} +\frac{c_3(d)C_{t_0}}{8} \left(\log\frac{c(d)}{|x-y|}\right)^\beta |x-y|^{2\wedge(4-d)}\right).
\end{align*}
The proof of the theorem is complete.
\end{proof}

\section{Further second order properties of the random field $u$}
\label{s4}

Throughout this section, we use the notation
\beqn
\sigma_{t,x} = E((u(t,x))^2),\ \rho_{(t,x),(s,y)} =\text{Corr}(u(t,x),u(s,y)),\  s,t\in(0,\infty),\ x,y\in \mathbb{T}^d.
\eeqn

\begin{lem}\label{lem2.2}
\begin{enumerate}
\item There exists a constant $c_{d,T}$ such that for all $s,t\in(0,T]$ and $x,y\in\mathbb{T}^d$,
\begin{equation}
\label{eq2.1.15}
\vert \sigma_{t,x}^2-\sigma_{s,y}^2\vert\leq  c_{d,T}\Vert u(t,x)-u(s,y)\Vert_{L^2(\Omega)}^{2}.
\end{equation}
\item Fix $t_0\in(0,T]$. There exist constants $0<c_{d,t_0} < C_{d,T}$ such that for any $(t,x)\in[t_0,T]\times \mathbb{T}^d$,
\beq
\label{var}
c_{d,t_0} \le \sigma_{t,x}^2 \le C_{d,T}.
\eeq
\item Fix $t_0\in(0,T]$. For any $(t,x),(s,y)\in[t_0,T]\times\mathbb{T}^d$ such that $(t,x)\neq(s,y)$, 
$$\rho_{(t,x),(s,y)}=\frac{E(u(t,x)u(s,y))}{\sigma_{t,x}\sigma_{s,y}}<1.$$
\end{enumerate}
\end{lem}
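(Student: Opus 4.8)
The plan is to prove the three items in order, each relying on the $L^2(\Omega)$-structure of the Gaussian process $u$ together with the pseudo-distance estimates of Section \ref{s3}.

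\emph{Item 1.} Here I would write $\sigma_{t,x}^2 = \|u(t,x)\|^2_{L^2(\Omega)}$ and use the elementary identity
\beqn
\sigma_{t,x}^2 - \sigma_{s,y}^2 = \bigl(\|u(t,x)\|_{L^2(\Omega)} - \|u(s,y)\|_{L^2(\Omega)}\bigr)\bigl(\|u(t,x)\|_{L^2(\Omega)} + \|u(s,y)\|_{L^2(\Omega)}\bigr).
\eeqn
By the reverse triangle inequality, the first factor is bounded in absolute value by $\|u(t,x)-u(s,y)\|_{L^2(\Omega)}$. For the second factor, I would invoke the uniform bound \eqref{1.1} from Theorem \ref{t1.1}, which gives $\|u(t,x)\|_{L^2(\Omega)} + \|u(s,y)\|_{L^2(\Omega)} \le 2\sqrt{C(T,d)}$. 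Multiplying, one gets \eqref{eq2.1.15} with $c_{d,T} = 2\sqrt{C(T,d)}$, but I realize that the claimed inequality has $\|u(t,x)-u(s,y)\|^2_{L^2(\Omega)}$ on the right, not the first power; so the argument must be refined. The way to get the square is to note that $\bigl|\,\|u(t,x)\|_{L^2(\Omega)} - \|u(s,y)\|_{L^2(\Omega)}\,\bigr| \le \|u(t,x)-u(s,y)\|_{L^2(\Omega)}$, hence also $\bigl|\sigma_{t,x}^2 - \sigma_{s,y}^2\bigr| \le \|u(t,x)-u(s,y)\|_{L^2(\Omega)}\cdot\bigl(\|u(t,x)\|_{L^2(\Omega)}+\|u(s,y)\|_{L^2(\Omega)}\bigr)$; to pass to a square on the right I would observe that $\|u(t,x)-u(s,y)\|_{L^2(\Omega)}$ is bounded below on the relevant set — but it is not bounded below near the diagonal, so instead I would split: if $\|u(t,x)-u(s,y)\|_{L^2(\Omega)}\ge 1$ use the first-power bound and the uniform upper bound; if $\|u(t,x)-u(s,y)\|_{L^2(\Omega)} < 1$ then $\|u(t,x)-u(s,y)\|_{L^2(\Omega)}\le \|u(t,x)-u(s,y)\|^2_{L^2(\Omega)}$ fails — rather, one has $a \le a$, and $a\le a^2$ only for $a\ge1$. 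The correct route is simply that $\bigl|\,a-b\,\bigr|\,(a+b) = |a^2-b^2|$ with $a=\|u(t,x)\|$, $b=\|u(s,y)\|$, so $|\sigma^2_{t,x}-\sigma^2_{s,y}| = |a-b|(a+b)$ and $|a-b|\le\|u(t,x)-u(s,y)\|_{L^2(\Omega)}$ gives only a first-power bound; I expect the paper's actual statement is consistent because $\|u(t,x)-u(s,y)\|_{L^2(\Omega)}$ is comparable to the pseudo-distance whose square appears naturally, and a cleaner argument uses $|a^2-b^2| = |a-b||a+b|$ with $|a-b|^2 \le |a^2 - b^2|$ when... In any case I would resolve this by a two-regime split on the size of $\|u(t,x)-u(s,y)\|_{L^2(\Omega)}$ relative to a fixed threshold, using \eqref{1.1} in both regimes.

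\emph{Item 2.} The upper bound $\sigma_{t,x}^2 \le C_{d,T}$ is immediate from \eqref{1.1}. For the lower bound on $[t_0,T]\times\mathbb{T}^d$, I would start from the explicit series \eqref{eq1.4} evaluated at $t$, namely $\sigma_{t,x}^2 = \frac{t}{(2\pi)^d} + \sum_{0\le n(k)\le d-1}\frac{1-e^{-2\lambda_k t}}{2^{n(k)+1}\pi^d\lambda_k}$, and simply drop all terms but the leading one: $\sigma_{t,x}^2 \ge \frac{t}{(2\pi)^d} \ge \frac{t_0}{(2\pi)^d} =: c_{d,t_0} > 0$. This is uniform in $x$ since \eqref{eq1.4} does not depend on $x$.

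\emph{Item 3.} By Cauchy--Schwarz, $\rho_{(t,x),(s,y)} \le 1$ always, with equality if and only if $u(t,x)$ and $u(s,y)$ are almost surely proportional, i.e. $u(t,x) = c\, u(s,y)$ a.s. for some constant $c$ (necessarily $c = \sigma_{t,x}/\sigma_{s,y}>0$ after checking the sign via positivity of the covariance, which follows from \eqref{basic1}-type expressions). To rule this out for $(t,x)\neq(s,y)$, I would argue that equality would force $\|u(t,x) - c\,u(s,y)\|_{L^2(\Omega)} = 0$; expanding this in the orthonormal-basis/Wiener-isometry representation and comparing Fourier--Wiener coefficients (the coefficient of the stochastic integral against $\varepsilon_{i,k}$ on the time interval $[0,s]$ versus $[s,t]$) forces both the temporal profiles $e^{-\lambda_k(t-r)}$ and $e^{-\lambda_k(s-r)}$ to coincide up to the factor $c$ on $[0,s]$ — impossible unless $t=s$ — and separately, if $t=s$ but $x\neq y$, it forces $\varepsilon_{i,k}(x) = c\,\varepsilon_{i,k}(y)$ for all $(i,k)$, which by completeness of $\textbf{B}$ and evaluation forces $x=y$. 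Equivalently, and more cleanly, one uses \eqref{isometry}: if $t>s$ the term $\frac{t-s}{(2\pi)^d}>0$ plus the second series is strictly positive and cannot be cancelled, giving $\|u(t,x)-u(s,y)\|^2_{L^2(\Omega)}>0$ and then a strict Cauchy--Schwarz; if $t=s$, $x\neq y$, then \eqref{basic1} is a sum of strictly positive terms (for at least one $k$, $1-\prod_j\cos(k_j(x_j-y_j))>0$), again strictly positive.

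\emph{Main obstacle.} The only genuinely delicate point is getting the \emph{square} of the $L^2(\Omega)$-increment on the right-hand side of \eqref{eq2.1.15} rather than its first power — a naive factorization $|a^2-b^2| = |a-b|(a+b)$ yields only the first power, so one must either restrict to a bounded region where the increment is bounded above (making first and second powers comparable only from above, which is the wrong direction) or, as I would do, perform a case split on whether $\|u(t,x)-u(s,y)\|_{L^2(\Omega)}$ exceeds a fixed constant, handling the "small increment" case by a more careful expansion of $\sigma^2_{t,x}-\sigma^2_{s,y}$ directly in terms of the series \eqref{isometry} (each difference of series terms being controlled by the corresponding increment squared, since the summands are smooth and uniformly bounded). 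Items 2 and 3 are routine given the explicit formulas already established.
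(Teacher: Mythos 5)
Your items 2 and 3 are essentially fine. For item 2 your lower bound $\sigma_{t,x}^2\ge t/(2\pi)^d\ge t_0/(2\pi)^d$ is valid (the paper instead keeps the $n(k)=d-1$ terms to get $(1-e^{-2t})/(2^d\pi^d)$, but either works, and both use that \eqref{eq1.4} is $x$-independent). For item 3 your Fourier--Wiener coefficient argument is sound in substance, though be careful: on $[0,s]$ the profiles $e^{-\lambda_k(t-r)}$ and $e^{-\lambda_k(s-r)}$ \emph{do} coincide up to the constant factor $e^{-\lambda_k(t-s)}$, so that alone does not force $t=s$; the decisive point for $s<t$ is the contribution $\int_s^t\int_{\mathbb{T}^d}G^2(t-r;x,z)\,dz\,dr>0$, which kills proportionality for every $\lambda$ at once — this is exactly the paper's argument. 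Your ``more cleanly'' variant via \eqref{isometry} only rules out $\lambda=1$ and is insufficient on its own. For $t=s$, $x\ne y$, the paper splits into $\lambda=1$ (contradiction with the lower bound in \eqref{sbd}) and $\lambda\ne1$ (Lemma 3.4 of \cite{hin:san}); your basis-evaluation argument is a reasonable self-contained alternative.

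The genuine gap is in item 1, and you correctly diagnosed it but did not close it. The factorization $|\sigma_{t,x}^2-\sigma_{s,y}^2|=|a-b|(a+b)$ yields only the first power of the increment, which is \emph{weaker} than the claimed squared bound when the increment is small, and your proposed repair for the small-increment regime (``each difference of series terms being controlled by the corresponding increment squared, since the summands are smooth and uniformly bounded'') is not a proof: smoothness and boundedness of summands do not by themselves control a variance difference by a squared $L^2$-increment. The point you are missing is that by \eqref{eq1.4} the variance $\sigma_{t,x}^2$ does not depend on $x$, so for $s\le t$ one can compute exactly
\beqn
\sigma_{t,x}^2-\sigma_{s,y}^2=\frac{t-s}{(2\pi)^d}+\sum_{\substack{k\in\mathbb{N}^d\\ 0\le n(k)\le d-1}}\frac{e^{-2\lambda_k s}\left(1-e^{-2\lambda_k(t-s)}\right)}{2^{n(k)+1}\pi^d\lambda_k},
\eeqn
and both pieces are dominated by $\Vert u(t,x)-u(s,y)\Vert_{L^2(\Omega)}^2$ directly: the drift term $\tfrac{t-s}{(2\pi)^d}$ is at most $c_{d,T}\vert t-s\vert^{1-d/4}$ and hence controlled by \eqref{tbd-bis}, while the series (after bounding $e^{-2\lambda_k s}\le1$) is exactly one of the nonnegative summands appearing in the expansion \eqref{isometry} of $\Vert u(t,x)-u(s,y)\Vert_{L^2(\Omega)}^2$. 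No case split on the size of the increment is needed; the squared bound falls out of this term-by-term comparison. As written, your plan for item 1 does not contain a complete argument for the small-increment case, which is the only case where the statement has content.
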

\begin{proof}
1.\ Without loss of generality we may assume that $0<s\le t$.  Applying \eqref{eq1.4} yields
\beqn
\label{eq2.1.15bis}
\vert \sigma_{t,x}^2-\sigma_{s,y}^2\vert = \frac{t-s}{(2\pi)^d} + \frac{1}{2^{n(k)+1}\pi^d}\sum_{\substack{k\in  \mathbb{N}^{d}\\ 0\le n(k)\le d-1}} \frac{e^{-2\lambda_ks}\left(1-e^{-2\lambda_k(t-s)}\right)}{\lambda_k}.
\eeqn
Use the inequality \eqref{tbd-bis} to get $\frac{t-s}{(2\pi)^d}\le \bar c_{d,T}\Vert u(t,x)-u(s,y)\Vert_{L^2(\Omega)}^2$.
Since $e^{-2\lambda_ks}\le 1$ and because of \eqref{isometry}, we see that the second term on the right-hand side of this equality is bounded above by $\Vert u(t,x)-u(s,y)\Vert_{L^2(\Omega)}^2$. This ends the proof of \eqref{eq2.1.15}.
\smallskip

2.\  The claim follows from \eqref{eq1.4}, observing that
\beqn
\sigma_{t,x}^2 \ge \sum_{\substack{k\in  \mathbb{N}^{d}\\ n(k)= d-1}} \frac{1-e^{-2\lambda_k t}}{2^{n(k)+1}\pi^d\lambda_k}
\ge \frac{1-e^{-2t}}{2^d\pi^d}, \quad t\ge 0.
\eeqn

3.\ Assume that $\rho_{(t,x),(s,y)}=1$. Then, there would exist $\lambda\in\mathbb{R}\setminus\{0\}$ such that 
$\Vert u(t,x)-\lambda u(s,y)\Vert_{L^2(\Omega)}=0$. This leads to a contradiction. Indeed, consider first the case $0<s<t$. By the isometry property of the Wiener integral,
\begin{align}
\label{eq2.1.17}
\Vert u(t,x)-\lambda u(s,y)\Vert_{L^2(\Omega)}^2& = \int_0^s  dr \int_{\mathbb{T}^d} dz (G(t-r;x,z)-\lambda G(s-r;y,z))^2\notag\\
&\quad +\int_s^t dr \int_{\mathbb{T}^d} dz\  G^2(t-r;x,z)\notag\\
&\ge\int_0^{t-s} dr \int_{\mathbb{T}^d} dz\  G^2(r;x,z)>0,
\end{align}
by the properties of $G$. 

Next, we assume $t=s$ and $x\neq y$. If $\lambda=1$, we see that 
\beqn
\Vert u(t,x)-\lambda u(t,y)\Vert_{L^2(\Omega)}=\Vert u(t,x)-u(t,y)\Vert_{L^2(\Omega)}=0
\eeqn
 is in contradiction with the lower bound in \eqref{sbd}. If $\lambda \ne 1$, we apply Lemma 3.4 in \cite{hin:san} to the stochastic process $(u(t,x), \ x\in\mathbb{T}^d)$, with $t\in[t_0,T]$ fixed. Notice that, because of the statements 1. and 2. proved above and Proposition \ref{prop2.2}, the hypotheses of that Lemma hold. We deduce
\beqn
\Vert u(t,x)-\lambda u(t,y)\Vert_{L^2(\Omega)}^2\ge c(1-\lambda)^2>0.
\eeqn
\end{proof}


\section{Solution to the deterministic homogeneous equation}
\label{s-6}

In this section, we consider the equation \eqref{eq1.1} with $\sigma=0$ whose solution in the classical sense and in finite time horizon is given by the function
\beqn
[0,T]\times \mathbb{T}^d\ni (t,x)\  \longrightarrow I_0(t,x) = \int_{\mathbb{T}^d} G(t;x,z) v_0(z) dz.
\eeqn
In the next proposition, we prove the joint continuity of this mapping.

\begin{prop}
\label{s6-p1}
Let $v_0\in L^1(\mathbb{T}^d)$. 
Then, the function
$(t,x) \mapsto I_0(t,x)$ is jointly Lipschitz continuous.
\end{prop}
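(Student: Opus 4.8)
The goal is to show that $(t,x)\mapsto I_0(t,x) = \int_{\mathbb{T}^d} G(t;x,z)\,v_0(z)\,dz$ is jointly Lipschitz on $[0,T]\times\mathbb{T}^d$, given only $v_0\in L^1(\mathbb{T}^d)$. The natural approach is to estimate $|I_0(t,x)-I_0(s,y)|$ by $\|v_0\|_{L^1(\mathbb{T}^d)}$ times a sup-norm modulus of the kernel, namely $\sup_{z\in\mathbb{T}^d}|G(t;x,z)-G(s;y,z)|$, so everything reduces to an analytic study of the Green's function \eqref{eq1.2}. First I would split the increment through an intermediate point, writing $G(t;x,z)-G(s;y,z) = [G(t;x,z)-G(s;x,z)] + [G(s;x,z)-G(s;y,z)]$, and treat the time increment and the space increment separately.

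For the time increment, using the series representation $G(t;x,z)=\sum_{k\in\mathbb{N}^d}\frac{e^{-\lambda_k t}}{2^{n(k)}\pi^d}\prod_j\cos(k_j(x_j-z_j))$, the $k$-th term contributes at most $\frac{1}{2^{n(k)}\pi^d}|e^{-\lambda_k t}-e^{-\lambda_k s}|$. Here the key point is that $s,t$ need not be bounded away from $0$, so I cannot simply bound $e^{-\lambda_k t}$ by something summable in $k$ uniformly. Instead I would use, for $t>s\ge 0$, the inequality $|e^{-\lambda_k t}-e^{-\lambda_k s}| = e^{-\lambda_k s}(1-e^{-\lambda_k(t-s)}) \le \min(1,\lambda_k(t-s)) \le (\lambda_k(t-s))^{\theta}$ for any $\theta\in(0,1]$, and then choose $\theta$ so that $\sum_k \lambda_k^{\theta}/(2^{n(k)}\pi^d)$ converges. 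Since $\lambda_k \asymp |k|^4$, the series $\sum_{k\in\mathbb{N}^{d,*}} |k|^{4\theta}/|k|^{?}$... — actually $\lambda_k^\theta$ alone is not summable, so this crude bound fails and one must be more careful: one should instead interpolate as $e^{-\lambda_k s}(1-e^{-\lambda_k(t-s)}) \le e^{-\lambda_k s}(\lambda_k(t-s))$ and exploit the decay $e^{-\lambda_k s}$ — but $s$ can be $0$. The resolution is to absorb $(t-s)$ at power $1$ only on a finite block of frequencies $|k|\le R$ and use $|e^{-\lambda_k t}-e^{-\lambda_k s}|\le 2e^{-\lambda_k s}\le 2$ summed against the tail; splitting at $R=R(t-s)$ and optimizing gives a Lipschitz (indeed Hölder with a clean exponent) bound. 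Concretely: $\sum_{|k|\le R}\lambda_k(t-s)\lesssim (t-s)R^{d+4}$ and $\sum_{|k|>R} 1$ diverges, which signals that the honest route is to keep the factor $e^{-\lambda_k s}$ and, after noting the worst case is $s=0$ where $I_0(0,x)=v_0(x)$ need not even be continuous — so in fact joint Lipschitz continuity \emph{up to $t=0$} forces us to reconsider. Rereading the statement: it asserts joint Lipschitz continuity on $[0,T]\times\mathbb{T}^d$; since $v_0$ is merely $L^1$, continuity at $t=0$ cannot hold pointwise, so the intended claim must be Lipschitz continuity on $(0,T]\times\mathbb{T}^d$, or the $L^1$ hypothesis is being used together with the smoothing of $G$ for $t>0$; I would state and prove the estimate on $[\epsilon,T]\times\mathbb{T}^d$ for each $\epsilon>0$ with constant depending on $\epsilon$ (equivalently, local Lipschitz continuity on $(0,T]\times\mathbb{T}^d$), which is what the subsequent sections actually need.

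For the space increment at fixed $s>0$, I would bound $|G(s;x,z)-G(s;y,z)| \le \sum_k \frac{e^{-\lambda_k s}}{2^{n(k)}\pi^d}\big|\prod_j\cos(k_j(x_j-z_j)) - \prod_j\cos(k_j(y_j-z_j))\big|$, and use the telescoping/mean-value bound $|\prod_j\cos a_j - \prod_j\cos b_j| \le \sum_j |a_j-b_j| \le |k|\,|x-y|$ (sharpened as in \eqref{meanvalue} to $\min(1,|k||x-y|)$ if needed). This yields $|G(s;x,z)-G(s;y,z)| \le |x-y|\sum_k \frac{|k|\,e^{-\lambda_k s}}{2^{n(k)}\pi^d}$, and the series $\sum_k |k|\,e^{-c|k|^4 s}$ converges for every $s>0$, with a bound $C(d)\,s^{-(d+1)/4}$ by comparison with an integral. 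Combining, $|I_0(t,x)-I_0(s,y)| \le \|v_0\|_{L^1}\big(C_1(\epsilon)(t-s)^{\gamma} + C_2(\epsilon)|x-y|\big)$ on $t,s\ge\epsilon$, giving joint Lipschitz (or joint Hölder, which can be upgraded to Lipschitz in $t$ since $\partial_t G$ is likewise integrable with a $t$-dependent bound: $|\partial_t G(t;x,z)|\le \sum_k \frac{\lambda_k e^{-\lambda_k t}}{2^{n(k)}\pi^d}\le C(d)t^{-(d+4)/4}$, so $|G(t;x,z)-G(s;x,z)|\le |t-s|\sup_{r\in[s,t]}C(d)r^{-(d+4)/4}$) continuity on every $[\epsilon,T]\times\mathbb{T}^d$. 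The main obstacle is precisely the behaviour near $t=0$: one must either restrict to $t\ge\epsilon$, or, if the authors genuinely mean $[0,T]$, impose more than $v_0\in L^1$ (e.g. $v_0$ Lipschitz); I would flag this and prove the version that holds, namely local Lipschitz continuity on $(0,T]\times\mathbb{T}^d$ with constants blowing up as a negative power of $\mathrm{dist}(\cdot,\{t=0\})$.
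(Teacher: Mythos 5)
Your strategy is the same as the paper's: bound $|I_0(t,x)-I_0(s,y)|$ by $\Vert v_0\Vert_{L^1(\mathbb{T}^d)}$ times a uniform estimate on the increments of the kernel \eqref{eq1.2}, splitting through an intermediate point, controlling the time part via $|e^{-\lambda_k t}-e^{-\lambda_k s}|$ and the space part via $\bigl|\prod_j\cos(k_j(x_j-z_j))-\prod_j\cos(k_j(y_j-z_j))\bigr|\le |k|\,|x-y|$. The substantive difference is that you track the behaviour as $t\downarrow 0$, and your objection there is correct: for $v_0$ merely in $L^1$, $I_0(0,\cdot)=v_0$ need not be continuous (indeed the defining series for $G(0;x,\cdot)$ diverges), so joint Lipschitz continuity on all of $[0,T]\times\mathbb{T}^d$ with a constant $C_d\Vert v_0\Vert_{L^1(\mathbb{T}^d)}$ cannot hold. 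The paper's proof achieves uniformity in $t$ only through two estimates that fail near $t=0$: it bounds $|e^{-\lambda_k t}-e^{-\lambda_k s}|$ by $(t-s)/\lambda_k$, which is false in general (take $\lambda_k=100$, $s=10^{-3}$, $t-s=10^{-1}$: the left side is about $0.9$, the right side $10^{-3}$; the mean value theorem gives $\lambda_k(t-s)$, and a negative power of $\lambda_k$ can be extracted only from the factor $e^{-\lambda_k s}$, at the cost of a constant degenerating as $s\downarrow 0$); and it bounds $\sum_k|k|e^{-\lambda_k t}$ by $\int_0^\infty\rho^{d}e^{-\rho^4/2^{d-1}}\,d\rho$, dropping the $t$ in the exponent, whereas the correct comparison yields $C_d\,t^{-(d+1)/4}$.

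Your repaired statement --- Lipschitz continuity on $[t_0,T]\times\mathbb{T}^d$ for each $t_0>0$, with constants of order $t_0^{-(d+4)/4}$ in time (via $\sum_k\lambda_k e^{-\lambda_k t}\le C_d t^{-(d+4)/4}$) and $t_0^{-(d+1)/4}$ in space --- is what actually holds under the $L^1$ hypothesis, and it suffices for everything downstream: Remark \ref{s6-r1} and Theorem \ref{s5-t5.1} only use the regularity of $I_0$ on $[t_0,T]\times\mathbb{T}^d$ with $t_0>0$. Uniform Lipschitz continuity up to $t=0$ would require more of $v_0$ (e.g.\ $v_0$ Lipschitz), as you note. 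Your individual estimates (the telescoping cosine bound, $\sum_j k_j|x_j-y_j|\le|k|\,|x-y|$, and the two integral comparisons) are all correct, so apart from trimming the exploratory digressions, your proof of the corrected statement is sound and is, on this point, more careful than the paper's.
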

\begin{proof}
{\em Increments in time.}\ Fix $0<s\le t\le T$, Using the definition of $G(t;x,z)$ given in \eqref{eq1.2}, we see that for any $x\in \mathbb{T}^d$,
\begin{align*}
&|I_0(t,x) - I_0(s,x)|\\
 &\qquad \quad = \left\vert \int_{\mathbb{T}^d} dz\  v_0(z) \sum_{k\in\N^{d,*}} \left(e^{-\lambda_k t} - e^{-\lambda_k s}\right) 
\sum_{\substack{i\in\Z_2^d\\ (i,k)\in(\Z_2\times \N)^d_+}}\varepsilon_{i,k}(x)\varepsilon_{i,k}(z)\right\vert\\
&\qquad \quad \le \int_{\mathbb{T}^d} dz \vert v_0(z)\vert \sum_{k\in\N^{d,*}}\frac{t-s}{\lambda_k}\ \frac{1}{2^{n(k)}\pi^d} \left\vert \prod_{j=1}^d \cos(k_j(x_j-z_j))\right\vert\\
&\qquad \quad \le C_d (t-s)\Vert v_0\Vert_{L^1(\mathbb{T}^d)} \sum_{k\in\N^{d,*}}\frac{1}{\lambda_k} \le \left[C_d \Vert v_0\Vert_{L^1(\mathbb{T}^d)}\right] (t-s).
\end{align*}

{\em Increments in space.}\ Let $x,y\in\mathbb{T}^d$. Then, for any $t\in[0,T]$,
\begin{align*}
|I_0(t,x) - I_0(t,y)|&= \left\vert \int_{\mathbb{T}^d} dz\  v_0(z) \sum_{(i,k)\in(\Z_2\times \N)^d_+} e^{-\lambda_k t} (\varepsilon_{i,k}(x)-\varepsilon_{i,k}(y)) \varepsilon_{i,k}(z)\right\vert\\
&\le  |x-y|\  \sum_{k\in\N^{d,*}} |k| e^{-\lambda_k t}\int_{\mathbb{T}^d} dz \vert v_0(z)\vert
\end{align*}
Up to a multiplicative constant depending on $d$, the series in the above expression is bounded by $\int_0^\infty \rho^d e^{-\frac{\rho^4}{2^{d-1}}}= C_d\Gamma_E\left(\frac{d+1}{4}\right)$, where $\Gamma_E$ denotes the Euler Gamma function.

The proof of the proposition is complete.
\end{proof}

\begin{rem}
\label{s6-r1}
Combining Proposition \ref{s6-p1} with the estimate \eqref{eq2.14-upper} yields the following. The sample paths of the stochastic process $(v(t,x),\ (t,x)\in[0,T]\times \mathbb{T}^d)$ are H\"older continuous, jointly in $(t,x)$, of degree $(\eta_1,\eta_2)$ with
\beqn
\eta_1 \in\left(0, \tfrac{4-d}{8}\right),\quad \eta_2\in \left(0, \left(1\wedge \tfrac{4-d}{2}\right)\right).
\eeqn
Indeed, $v(t,x) = I_0(t,x) + u(t,x)$, and the process  $(u(t,x))$ is Gaussian. Hence, the claim follows from Kolmogorov's continuity criterion (see e.g. \cite{kunita}). 
\end{rem}

\section{Hitting probabilities and polarity of sets}
\label{s5}
Consider the Gaussian random field 
\beqn
V=(V(t,x) = \left(v_1(t,x), \ldots,v_D(t,x)),\ (t,x)\in [0,T]\times \mathbb{T}^d\right),
\eeqn
 where $(v_j(t,x)),\  j=1,\ldots, D$, are independent copies of the process $(v(t,x))$ defined in \eqref{eq1.1bis}. For simplicity, we will take $\sigma = 1$ there. 
 Recall that $A\in\mathcal{B}(\R^D)$ is called {\em polar} for the random field $V$ if $P(V(I\times J)\cap A\ne \emptyset)=0$, and is {\em nonpolar} otherwise. 
In this section, we discuss this notion using basically the results of \cite{hin:san}.
We first introduce some notation.
For  $\tau\in \R_+$, let
\begin{align}
\label{5.1}
&q_1(\tau)=\tau^{(4-d)/8},\quad q_2(\tau)=\left(\log\frac{C(d)}{\tau}\right)^{\frac{\beta}{2}}\tau^{1\wedge ((4-d)/2)},\quad \beta = 1_{\{d=2\}},\notag\\
&\bar{g}_q(\tau)=\tau^D \left(q_1^{-1}(\tau)\right)^{-1} \left(q_2^{-1}(\tau)\right)^{-d},
\end{align}
where the subscript $q$ on the last expression refers to the couple $(q_1,q_2)$.

 Let $D_0 = [(4-d)/8]^{-1}+d[1\wedge((4-d)/2)]^{-1}$. If $D>D_0$, the functions $\bar{g}_q$ and $(\bar{g}_q)^{-1}$ satisfy the conditions required by the definitions of the $\bar{g}_q$-Hausdorff measure and the $(\bar{g}_q)^{-1}$-capacity, respectively (see \cite{hin:san}[Section 5] for details). 
 
 In the next theorem, $I=[t_0,T]$ and $J=[0,M]^d$, where $0<t_0\le T$ and $M\in(0,2\pi)$.
 \begin{teo}
 \label{s5-t5.1} 
 The hitting probabilities relative to the $D$-dimensional random field $V$ satisfy the following bounds.
 \begin{enumerate}
\item  Let $D>D_0$. 
\begin{enumerate}
\item There exists a constant $C:=C(I,J,D,d)$ such that for any Borel set $A\in\mathcal{B}(\R^D)$,
\begin{equation}
\label{eq3.1.4}
P(V(I\times J)\cap A\neq\emptyset))\leq C\mathcal{H}_{\bar{g}_q}(A).
\end{equation}
\item Let $N>0$ and $A\in\mathcal{B}(\R^D)$ be such that $A\subset B_N(0)$. There exists a constant $c:=c(I,J,N,D,d)$ such that
\begin{equation}
\label{eq3.1.5}
P(V(I\times J)\cap A\neq\emptyset))\geq c\text{Cap}_{(\bar{g}_q)^{-1}}(A).
\end{equation}
\end{enumerate}
\item Let $D<D_0$ and $A\in\mathcal{B}(\R^D)$. 
\begin{enumerate} 
\item $\mathcal{H}_{\bar{g}_q}(A)= \infty$ and therefore \eqref{eq3.1.4} holds, but it is non informative. 
\item If $A$ is bounded, there exists a constant $c:=c(I,J,N,D,d)>0$ such that
\begin{equation}
\label{eq3.1.6}
P(V(I\times J)\cap A\neq\emptyset))\geq c = c\text{Cap}_{(\bar{g}_q)^{-1}}(A).
\eeq
Hence, \eqref{eq3.1.5} holds.
\end{enumerate}
\end{enumerate}
\end{teo}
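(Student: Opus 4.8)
The plan is to derive Theorem \ref{s5-t5.1} as an application of Theorems 3.4 and 3.5 of \cite{hin:san} to the $D$-dimensional random field $V$, using the second-order properties of $u$ (equivalently $v$) established in Sections \ref{s3}--\ref{s-6}. First I would verify that the hypotheses of those theorems are satisfied on the parameter set $[t_0,T]\times J$. The key inputs are: (i) the two-sided bound for the canonical pseudo-distance $\mathfrak{d}_u$ from Theorem \ref{teo2.1}, which shows that $\mathfrak{d}_u$ is equivalent to $\big(q_1(|t-s|)^2 + q_2(|x-y|)^2\big)^{1/2}$ with $q_1,q_2$ as in \eqref{5.1} (note $q_1(\tau)=\tau^{(4-d)/8}$ corresponds to the time exponent $(1-d/4)/2$, and $q_2$ to the space gauge); (ii) the bound \eqref{eq2.1.15} on the oscillation of the variances $\sigma_{t,x}^2$ in terms of $\mathfrak{d}_u^2$; (iii) the uniform non-degeneracy $c_{d,t_0}\le \sigma_{t,x}^2\le C_{d,T}$ from \eqref{var}; and (iv) the strict sub-unit correlation $\rho_{(t,x),(s,y)}<1$ from Lemma \ref{lem2.2}(3), together with the joint Lipschitz (hence Hölder) continuity of the sample paths of $v$ from Proposition \ref{s6-p1} and Remark \ref{s6-r1}. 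One also needs the lower bound on $\mathfrak d_u$ (the harmonic-series argument in Proposition \ref{p3.1}) to check the local non-determinism / variance non-degeneracy type conditions, and possibly an explicit estimate of the type $E\big((u(t,x)-u(s,y))^2 \mid \mathcal F\big)$ bounding conditional variances below; this last point is where the structure of the criteria in \cite{hin:san} must be matched carefully.

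Next I would translate the conclusion of \cite{hin:san}[Theorem 3.4] into the upper bound \eqref{eq3.1.4}. That theorem states that the hitting probability of a Borel set $A$ is bounded above by a constant times the $g$-Hausdorff measure of $A$, where $g$ is built from the gauge functions $q_1,q_2$ of the canonical metric precisely through the formula $\bar g_q(\tau)=\tau^D (q_1^{-1}(\tau))^{-1}(q_2^{-1}(\tau))^{-d}$: the exponent $D$ is the dimension of the target, and the two factors $(q_1^{-1})^{-1}$, $(q_2^{-1})^{-d}$ account for the one time coordinate and the $d$ space coordinates respectively. Since $q_1$ is a pure power, $q_1^{-1}(\tau)=\tau^{8/(4-d)}$ contributes a power factor; for $d=1,3$ the function $q_2$ is also a pure power, so $\bar g_q$ reduces to $\tau^\gamma$ for an explicit $\gamma$ and $\mathcal H_{\bar g_q}$ is the classical Hausdorff measure; for $d=2$ the logarithmic correction in $q_2$ survives, giving exactly the gauge of type (ii) from Section \ref{s2}. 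Symmetrically, \cite{hin:san}[Theorem 3.5] gives the lower bound \eqref{eq3.1.5} in terms of $\mathrm{Cap}_{(\bar g_q)^{-1}}(A)$ for $A\subset B_N(0)$, the restriction to bounded $A$ being intrinsic to the capacity lower bound. The condition $D>D_0$ with $D_0=[(4-d)/8]^{-1}+d[1\wedge((4-d)/2)]^{-1}$ is exactly what makes $\bar g_q(0^+)=0$ (i.e. the total "index" $\tfrac{8}{4-d}+\tfrac{d}{1\wedge((4-d)/2)}$ is $<D$), so that $\bar g_q$ and $(\bar g_q)^{-1}$ are admissible gauge/potential functions, as already recorded in the excerpt.

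For part 2, when $D<D_0$ the index exceeds $D$, so $\bar g_q(\tau)\to\infty$ as $\tau\downarrow 0$; by the convention fixed in Section \ref{s2} (coherent with negative-dimensional Hausdorff measure) this forces $\mathcal H_{\bar g_q}(A)=\infty$ for every nonempty $A$, whence \eqref{eq3.1.4} is trivially true and vacuous — this is item 2(a). For item 2(b), when $A$ is bounded and nonempty I would argue that the hitting probability is bounded below by a positive constant. One way: pick a point $a\in A$ and a small ball $B_\varepsilon(a)\subset$ (a fixed bounded region), and show $P(V(I\times J)\cap B_\varepsilon(a)\neq\emptyset)\ge c_\varepsilon>0$; this follows because the Gaussian field $V$ restricted to, say, a fixed point $(t_0,x_0)$ has a density bounded below on compacts (by \eqref{var}) so it hits any fixed small ball with positive probability, uniformly in the choice of $a$ within a bounded set. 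Alternatively, and more in line with \cite{hin:san}, one invokes the lower-bound criterion directly: when $D<D_0$ the relevant capacity $\mathrm{Cap}_{(\bar g_q)^{-1}}(A)$ equals $1$ by the convention (since $(\bar g_q)^{-1}(0)\in[0,\infty)$ in that regime), so the asserted bound $P(\cdots)\ge c = c\,\mathrm{Cap}_{(\bar g_q)^{-1}}(A)$ is precisely \eqref{eq3.1.5} read in this degenerate case. I would present the short positive-probability argument and then note the consistency of constants. The main obstacle I anticipate is not any single estimate but the bookkeeping: checking that every hypothesis of \cite{hin:san}[Theorems 3.4, 3.5] — in particular the conditions on the gauge functions $q_1,q_2$ (monotonicity, doubling, the technical regularity assumptions under which the logarithmic gauge is admissible) and the local-non-determinism-type lower bounds on conditional variances — is met by our $u$, and matching the precise form of $\bar g_q$ in \eqref{5.1} with the general gauge produced by those theorems; the analytic heavy lifting (Propositions \ref{p3.1}, \ref{prop2.2}, Lemma \ref{lem2.2}, Proposition \ref{s6-p1}) has already been done, so the remaining work is essentially a careful citation-and-verification.
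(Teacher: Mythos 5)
Your treatment of part 1 and of part 2(a) coincides with the paper's: the upper and lower bounds for $D>D_0$ are obtained by citing the hitting-probability criteria of \cite{hin:san}, with the hypotheses supplied by Theorem \ref{teo2.1}, Lemma \ref{lem2.2} and Proposition \ref{s6-p1}, and the admissibility of $q_1,q_2,\bar g_q$ deferred to \cite{hin:san}[Section 5]; and $\mathcal{H}_{\bar g_q}(A)=\infty$ for $D<D_0$ is just the convention since $\bar g_q(0^+)=\infty$. The equality $c=c\,\mathrm{Cap}_{(\bar g_q)^{-1}}(A)$ via the convention $\mathrm{Cap}_{(\bar g_q)^{-1}}(A)=1$ when $(\bar g_q)^{-1}(0)=0$ is also as in the paper.

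The gap is in your proof of the inequality $P(V(I\times J)\cap A\neq\emptyset)\geq c$ in part 2(b). Your first argument shows only that $V$ hits the $\varepsilon$-ball $B_\varepsilon(a)$ with probability $c_\varepsilon>0$; since $c_\varepsilon$ is allowed to depend on $\varepsilon$, this says nothing about hitting $A$ itself (take $A=\{a\}$: for $D>D_0$ every ball around $a$ is hit with positive probability, yet $\{a\}$ is polar). Your second alternative -- ``read \eqref{eq3.1.5} in the degenerate case'' -- is circular, because \eqref{eq3.1.5} is only produced by \cite{hin:san}[Thm. 3.5] under the hypothesis $D>D_0$; there is no statement to invoke when $D<D_0$. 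The paper closes this gap with a Paley--Zygmund argument on the occupation functional $J_\varepsilon(z)=(2\varepsilon)^{-D}\int_I\int_J 1_{B_\varepsilon(z)}(V(s,y))\,ds\,dy$: one needs $E(J_\varepsilon(z))\geq C_1$ (from the uniform lower bound on the one-point density, via \eqref{var}) and, crucially, $E\bigl[(J_\varepsilon(z))^2\bigr]\leq C_2$ \emph{uniformly in} $\varepsilon$ and $z$. The second-moment bound requires the two-point density estimate
$p_{s,y;t,x}(z_1,z_2)\leq C[\rho((s,y),(t,x))]^{-D}\exp\bigl(-c|z_1-z_2|^2[\rho((s,y),(t,x))]^{-2}\bigr)$
(derived from Theorem \ref{teo2.1} as in \cite{dal:san}) and the finiteness of $\int_{I\times J}\int_{I\times J}[\rho((s,y),(t,x))]^{-D}$, which after a change of variables reduces to $\int_0^{c_0}\rho^{-D+1/\alpha_1+d/\alpha_2-1}\,d\rho<\infty$ -- and this is exactly where the hypothesis $D<D_0$ is used. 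Only then does letting $\varepsilon\downarrow 0$, with a constant independent of $\varepsilon$, yield the stated lower bound. This quantitative step is absent from your proposal and cannot be replaced by either of the two arguments you sketch.
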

\begin{proof}
Consider first the case $D>D_0$. The upper bound \eqref{eq3.1.4} follows by applying \cite{hin:san}[Thm. 3.3.], while the lower bound \eqref {eq3.1.4} follows from \cite{hin:san}[Thm. 3.5].
Indeed, from Theorem \ref{teo2.1}, Lemma \ref{lem2.2} and Proposition \ref{s6-p1}, we deduce that the random field $V$ satisfies the assumptions of those two theorems. As for the hypotheses required on $q_1$, $q_2$ and $\bar g_q$, they are proved in \cite{hin:san}[Section 5].

Let $D<D_0$. We have $\lim_{\tau\downarrow 0}\bar{g}_q(\tau):= \bar{g}_q(0)=\infty$ and then, by convention,
$\mathcal{H}_{\bar{g}_q}(A)= \infty$. 

Next, we prove \eqref{eq3.1.6} by using arguments similar to those in \cite{dal:san}[Theorem 2.1, p. 1348]. 

For $\varepsilon \in(0,1)$ and  $z\in A$, we denote by $B_\varepsilon(z)$ the ball centred at $z$ with radius $\varepsilon$ and define 
\beqn
J_\varepsilon(z) = \frac{1}{(2\varepsilon)^D}\int_I\int_J ds dy 1_{B_\varepsilon(z)}(V(s,y)).
\eeqn
Since $\{J_\varepsilon(z)>0\} \subset \{V(I\times J) \cap A^{(\varepsilon)}\ne\emptyset\}$, it suffices to prove that $P(J_\varepsilon(z)>0)>C$, for some positive constant $C$. Using the Paley-Zygmund inequality, this amounts to check 
\beqn
E\left(J_\varepsilon(z)\right) > C_1,\quad E\left[\left(J_\varepsilon(z)\right)^2\right]  < C_2,
\eeqn
for some $C_1, C_2 >0$.

Because of \eqref{var}, the one-point density of $V(t,x)$ is bounded uniformly on $(t,x)\in[t_0,T]\times \mathbb{T}^d$. This yields $E\left(J_\varepsilon(z)\right) >C_1$.

From Theorem \ref{teo2.1}, we deduce that the two-point densities of $(V(s,y),V(t,x))$ satisfy
\beqn
p_{s,y;t,x}(z_1,z_2) \le \frac{C}{[\rho((s,y),(t,x))]^D}\exp\left(-\frac{c |z_1-z_2|^2}{[\rho((s,y),(t,x))]^2}\right), \ z_1,z_2\in A,
\eeqn
where $\rho((s,y),(t,x))= |t-s|^{\frac{4-d}{8}}+ \left(\log\frac{C(d)}{|x-y|}\right)^{\frac{\beta}{2}}|x-y|^{1\wedge((4-d)/2)}$, $\beta=1_{\{d=2\}}$ (apply the arguments of \cite{dal:san}[Proposition 3.1]).
Consequently,
\beqn
E\left[\left(J_\varepsilon(z)\right)^2\right] \le \tilde C \int_{I\times J} ds dy \int_{I\times J} dt dx\  [\rho((s,y),(t,x))]^{-D}.
\eeqn
Set $\alpha_1 = \frac{4-d}{8}$, $\alpha_2 = 1\wedge((4-d)/2)$, so that  $D_0 =\frac{1}{\alpha_1} + \frac{d}{\alpha_2} $. Since the constant $C(d)$ is such that $\log\frac{C(d)}{|x-y|}\ge 1$, the last integral is bounded from above by
\beqn
I= C\int_{I\times J} ds dy \int_{I\times J} dt dx\ [|t-s|^{\alpha_1} + |x-y|^{\alpha_2}]^{-D}.
\eeqn
After some computations, we see that $I\le C \int_0^{c_0} \rho^{-D + \frac{1}{\alpha_1} + \frac{d}{\alpha_2}-1}\ d\rho$, which is finite if $D<D_0$.
This ends the proof of the inequality in \eqref{eq3.1.6}.

Since $\lim_{\tau\downarrow 0}[\bar{g}_q(\tau)]^{-1}:= [\bar{g}_q(0)]^{-1}=0$, by convention $\text{Cap}_{(\bar{g}_q)^{-1}}(A)=1$. This yields the last equality in 
\eqref{eq3.1.6}.
\end{proof}
\medskip

Theorem \ref{s5-t5.1} 1. implies the following.

\begin{cor}
\label{s5-c5.1}
Let $A\in\mathcal{B}(\R^D)$ and assume $D>D_0$.
\begin{enumerate}
\item If $\mathcal{H}_{\bar{g}_q}(A)=0$ then $A$ is polar for $V$.
\item If $A$ is bounded and $\text{Cap}_{(\bar{g}_q)^{-1}}(A)>0$, then $A$ is nonpolar for $V$.
\end{enumerate}
\end{cor}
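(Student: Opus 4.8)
The plan is to obtain both assertions directly from Theorem \ref{s5-t5.1}(1), of which the corollary is simply the probabilistic reformulation; no new estimate is required.

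For part 1, assume $\mathcal{H}_{\bar{g}_q}(A)=0$. First I would apply the upper bound \eqref{eq3.1.4} with the parameter box $I=[t_0,T]$, $J=[0,M]^d$ to get $P(V(I\times J)\cap A\ne\emptyset)\le C(I,J,D,d)\,\mathcal{H}_{\bar{g}_q}(A)=0$, so that $A$ is polar. If one wishes to read polarity as a statement about the full parameter domain $(0,T]\times\mathbb{T}^d$ rather than about a single box, I would additionally exhaust $(0,T]=\bigcup_{n\ge1}[1/n,T]$ and cover $\mathbb{T}^d$ by finitely many boxes of the above type (each such box is the image of $[0,M]^d$ under a rotation of the torus, to which the theorem applies verbatim thanks to the spatial translation invariance of $G$); a countable subadditivity / union bound then upgrades the conclusion to $P(V((0,T]\times\mathbb{T}^d)\cap A\ne\emptyset)=0$.

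For part 2, since $A$ is bounded there exists $N>0$ with $A\subset B_N(0)$. I would then invoke the lower bound \eqref{eq3.1.5} with this $N$, obtaining $P(V(I\times J)\cap A\ne\emptyset)\ge c(I,J,N,D,d)\,\text{Cap}_{(\bar{g}_q)^{-1}}(A)$, and the right-hand side is strictly positive precisely by the hypothesis $\text{Cap}_{(\bar{g}_q)^{-1}}(A)>0$. Hence $P(V(I\times J)\cap A\ne\emptyset)>0$, i.e. $A$ is nonpolar.

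I do not expect any genuine obstacle: the content is entirely contained in Theorem \ref{s5-t5.1}(1), and the only point needing a line of care is the bookkeeping that passes from a fixed parameter box $I\times J$ to the whole domain in part 1, which is routine monotonicity of hitting probabilities together with a countable union.
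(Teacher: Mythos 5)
Your argument is correct and is exactly the paper's: the corollary is stated as an immediate consequence of Theorem \ref{s5-t5.1}(1), with part 1 following from the upper bound \eqref{eq3.1.4} and part 2 from the lower bound \eqref{eq3.1.5}. Note only that the paper defines polarity relative to the fixed box $I\times J$, so your additional exhaustion of $(0,T]\times\mathbb{T}^d$ is not needed (though it is harmless and correct).
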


\begin{cor}
\label{s5-c5.2}
If $D>D_0$, points $z\in\R^D$ are polar for $V$ and are nonpolar if $D<D_0$.
\end{cor}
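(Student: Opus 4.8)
The plan is to derive Corollary~\ref{s5-c5.2} directly from Theorem~\ref{s5-t5.1} together with standard facts about the behaviour of the $\bar g_q$-Hausdorff measure and the $(\bar g_q)^{-1}$-capacity on singletons. For the polarity claim when $D>D_0$, I would take $A=\{z\}$ a single point in $\R^D$ and cover it by a single ball $B_\varepsilon(z)$ of radius $\varepsilon\downarrow 0$; this gives $\mathcal{H}_{\bar g_q}(\{z\})\le\lim_{\varepsilon\downarrow 0}\bar g_q(2\varepsilon)$, so I need $\bar g_q(\tau)\to 0$ as $\tau\downarrow 0$, i.e. the condition that $D>D_0$ is exactly what makes $\bar g_q(0)=0$ (as already noted in the proof of Theorem~\ref{s5-t5.1}). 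Hence $\mathcal{H}_{\bar g_q}(\{z\})=0$, and by Corollary~\ref{s5-c5.1}(1) the singleton $\{z\}$ is polar for $V$. This covers the first half.

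For the nonpolarity claim when $D<D_0$, I would invoke Theorem~\ref{s5-t5.1}(2)(b) directly: a single point $\{z\}$ is a bounded Borel set, so $P(V(I\times J)\cap\{z\}\ne\emptyset)\ge c>0$, which is precisely the statement that $\{z\}$ is nonpolar. (Equivalently, $[\bar g_q(0)]^{-1}=0$ forces $\text{Cap}_{(\bar g_q)^{-1}}(\{z\})=1$ by the stated convention, and then \eqref{eq3.1.5}/\eqref{eq3.1.6} gives the lower bound.) One should remark that the event $\{V(I\times J)\cap\{z\}\ne\emptyset\}$ does not depend on the choice of $I=[t_0,T]$ and $J=[0,M]^d$ in the sense that nonpolarity with respect to one such rectangle already establishes nonpolarity of the point for $V$ on $[0,T]\times\mathbb{T}^d$, which suffices.

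The argument is essentially a bookkeeping exercise once Theorem~\ref{s5-t5.1} is in hand, so there is no serious obstacle; the only point requiring a line of care is checking that $\bar g_q(0)=0$ when $D>D_0$ and $\bar g_q(0)=\infty$ when $D<D_0$. This follows from the scaling of the three factors in \eqref{5.1}: near $0$ one has $q_1^{-1}(\tau)\asymp\tau^{8/(4-d)}$ and, up to logarithmic corrections, $q_2^{-1}(\tau)\asymp\tau^{1/(1\wedge((4-d)/2))}$, so $\bar g_q(\tau)$ behaves like $\tau^{D}\tau^{-1/\alpha_1}\tau^{-d/\alpha_2}=\tau^{D-D_0}$ modulo a slowly varying factor, with $\alpha_1=(4-d)/8$ and $\alpha_2=1\wedge((4-d)/2)$. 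Thus the sign of the exponent $D-D_0$ controls the limit, and in the borderline regimes the logarithmic factor does not change the conclusion. I would phrase the proof as two short paragraphs, one for each case, citing Corollary~\ref{s5-c5.1} and Theorem~\ref{s5-t5.1}(2)(b) respectively, and noting in passing that the critical case $D=D_0$ is deliberately excluded (and discussed separately later in the section).
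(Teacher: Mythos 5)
Your proposal is correct and follows essentially the same route as the paper: for $D>D_0$ it deduces $\mathcal{H}_{\bar g_q}(\{z\})=0$ from $\bar g_q(0)=0$ and invokes the upper bound \eqref{eq3.1.4} (via Corollary \ref{s5-c5.1}), and for $D<D_0$ it applies the uniform lower bound \eqref{eq3.1.6} of Theorem \ref{s5-t5.1}(2)(b) to the bounded set $\{z\}$. The only point worth noting is that the verification of $\bar g_q(0)=0$ for $d=2$ (where the logarithmic correction enters) is delegated in the paper to \cite{hin:san}[Lemma 5.1] rather than argued by the scaling heuristic, but your reasoning that the power $\tau^{D-D_0}$ dominates the slowly varying factor away from the critical case is sound.
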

\begin{proof}
Assume first $D>D_0$. By the definition of the $\bar{g}_q$-Hausdorff measure, we have $\mathcal{H}_{\bar{g}_q}(\{z\})=0$. Hence, the polarity of $\{z\}$ follows from \eqref{eq3.1.4}. 

One can give another proof of this fact without appealing to Theorem \ref{s5-t5.1}. Indeed,
we have $\lim_{\tau\downarrow 0}\bar{g}_q(\tau):= \bar{g}_q(0)=0$. This is obvious for $d=1,3$. For $d=2$, it is proved in \cite{hin:san}[Lemma 5.1]. This property implies $P(V(I\times J)\cap\{z\}\neq\emptyset)=0$ (see \cite{hin:san}[Corollary 3.2]). Therefore $\{z\}$ is polar for $V$.

If $D<D_0$, we apply \eqref{eq3.1.6} to $A=\{z\}$ and deduce that $\{z\}$ is nonpolar. Actually, if $D<D_0$ any bounded Borel set $A$ is nonpolar for $V$.
\end{proof}

Consider the case $d=1,3$, for which the definitions of the $\mathcal{H}_{\bar{g}_q}$-Hausdorff measure and $(\bar{g}_q)^{-1}$-capacity are those of the classical Hausdorff measure and Bessel-Riesz capacity, respectively. Assume $D>D_0$. From Theorem \ref{s5-t5.1} and using the same proof as that of Corollary 5.3 (a) in \cite{dal:kho}, we obtain the geometric type property on the path of $V$:
\beqn
{\text{dim}_{\rm H}}(V(I\times J)) = D_0,\ a.s,
\eeqn
where $\text{dim}_{\rm H}$ refers to the Hausdorff dimension (see e.g. \cite{kahane}[Chapter 10, Section 2, p. 130])

We end this section with some open questions for further investigations. 

It would certainly be interesting to have a statement on the Hausdorff dimension of the path of $V$ also in dimension $d=2$. Looking back to \eqref{5.1}, we see that, in this dimension, there is a logarithmic factor in the definition of $\bar{g}_q$. This leads to the question of giving a notion of Hausdorff dimension based on the $\bar g_q$-Hausdorff measure. A suggestion can be found in \cite{kloeckner}. Indeed, the family $\mathcal{T}$ of functions
\beqn
\R_+\ni \tau \longrightarrow f_{\nu}(\tau):= \tau^\nu\left(\log\frac{C}{\tau}\right)^{1/2},\ \nu \in(0,\nu_0),
\eeqn
satisfies $f_{\nu_1}(\tau) = {\rm o} (f_{\nu_2})(\tau), \ \tau\downarrow 0$, whenever $\nu_1<\nu_2$; therefore,  $\mathcal{T}$ is  {\em a scale}
in the sense of \cite{kloeckner}[Definition 2.1]. According to \cite{kloeckner}[Definition 2.3], we can define the generalized notion of Hausdorff dimension (relative to $\mathcal{T}$),
\begin{align*}
{\rm{dim_H}}^{(f)}(A) &= \sup\{\eta\in(0,\nu_0) : \mathcal{H}_{f_\nu}(A)= \infty\} =\sup\{\eta\in (0,\nu_0): \mathcal{H}_{f_\nu}(A) >0\}\\
&=\inf\{\eta\in (0,\nu_0): \mathcal{H}_{f_\nu}(A) =0\} = \inf\{\eta\in (0,\nu_0): \mathcal{H}_{f_\nu}(A) <\infty\}.
\end{align*} 
We conjecture that ${\rm{dim_H}}^{(f)}(V(I\times J))= D_0$, a.s. 

A second conjecture, related to Corollary \ref{s5-c5.2}, is that singletons are polar if $D=D_0$. This question may be approached using
\cite{x-m-d}[Theorem 2.6], which gives sufficient conditions on Gaussian random fields ensuring polarity of points. Preliminary investigations predict some technical difficulties due to the complex expression of the harmonizable representation of the random field $V$. On the other hand, in dimension $d=1$, the processs $V$ is very regular in space and perhaps the approach based on  \cite{x-m-d} can be simplified.


\section{Appendix}
\label{s.appendix}
In this section, we gather some auxiliary results used in the paper.
\begin{lem}
\label{s-a-l.1}
Let $d\in\{1,2,3\}$. There exists a constant $C_d$ such that for any $h\ge 0$ and $x\in \mathbb{T}^d$,
\beq
\label{a.1}
\int_0^\infty dr\int_{\mathbb{T}^d} dz\ \left(G(r+h;x,z) - G(r;x,z)\right)^2 \le C_d h^{1-d/4}.
\eeq
\end{lem}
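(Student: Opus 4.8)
The plan is to diagonalize the left-hand side of \eqref{a.1} in the orthonormal basis $\textbf{B}$, carry out the $r$-integration explicitly, and reduce the estimate to a scalar lattice sum handled by comparison with an integral --- the same mechanism already used in the proof of Theorem \ref{t1.1} and in Proposition \ref{prop2.2}.

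First I would fix $x\in\mathbb{T}^d$ and note that, by \eqref{eq1.2}, the function $z\mapsto G(t;x,z)$ has coefficients $e^{-\lambda_k t}\varepsilon_{i,k}(x)$ in the basis $\textbf{B}$. Parseval's identity then gives, for $h\ge 0$ and $r\ge 0$,
\beqn
\int_{\mathbb{T}^d}\!dz\,(G(r+h;x,z)-G(r;x,z))^2=\sum_{(i,k)\in(\mathbb{Z}_2\times\mathbb{N})^d_+}\varepsilon_{i,k}(x)^2\,e^{-2\lambda_k r}\big(1-e^{-\lambda_k h}\big)^2 .
\eeqn
Since every term is nonnegative, Tonelli's theorem permits integrating term by term over $r\in(0,\infty)$; using $\int_0^\infty e^{-2\lambda_k r}\,dr=(2\lambda_k)^{-1}$ (the $k=0$ term vanishes because then $1-e^{-\lambda_k h}=0$) and $\sum_{i\in\mathbb{Z}_2^d}\varepsilon_{i,k}(x)^2=(2^{n(k)}\pi^d)^{-1}$, which follows from \eqref{basis} with $y=x$, I arrive at
\beqn
\int_0^\infty\!dr\int_{\mathbb{T}^d}\!dz\,(G(r+h;x,z)-G(r;x,z))^2=\sum_{k\in\mathbb{N}^{d,*}}\frac{(1-e^{-\lambda_k h})^2}{2^{n(k)+1}\pi^d\,\lambda_k}\le\frac{1}{2\pi^d}\,\Sigma(h),
\eeqn
with $\Sigma(h):=\sum_{k\in\mathbb{N}^{d,*}}\lambda_k^{-1}(1-e^{-\lambda_k h})^2$. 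Crucially, this bound no longer depends on $x$.

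It then remains to show $\Sigma(h)\le C_d h^{1-d/4}$ for all $h\ge 0$, which is trivial at $h=0$. For $h$ bounded away from $0$ I would simply use $(1-e^{-u})^2\le 1$ together with the convergence of $\sum_{k\in\mathbb{N}^{d,*}}\lambda_k^{-1}$ for $d\le 3$ (established in the proof of Theorem \ref{t1.1}), noting that $h^{1-d/4}$ is then bounded below since $1-d/4>0$. For small $h$ I would use $(1-e^{-u})^2\le\min(u^2,1)$ for $u\ge 0$ and split $\Sigma(h)$ according to whether $\lambda_k h\le 1$ or $\lambda_k h>1$. Recalling $d^{-1}|k|^4\le\lambda_k\le|k|^4$, the first part is bounded by $h^2\sum_{\lambda_k\le 1/h}\lambda_k\le h\cdot\#\{k\in\mathbb{N}^{d,*}:|k|\le(d/h)^{1/4}\}=O(h^{1-d/4})$, and the second by $d\sum_{|k|>h^{-1/4}}|k|^{-4}$, which by comparison with an integral exactly as in \eqref{away} (legitimate for $h$ small, so that $h^{-1/4}$ exceeds the threshold there) is $\le C_d\int_{h^{-1/4}}^\infty\rho^{d-5}\,d\rho=C_d' h^{(4-d)/4}=C_d' h^{1-d/4}$, the integral converging precisely because $d-5<-1$.

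I do not expect a genuine obstacle here: once \eqref{a.1} is put in the diagonal form above, everything reduces to the elementary estimate of $\Sigma(h)$. The only points requiring care are the uniformity of the constant in both $x$ (automatic after summing over $i$ via \eqref{basis}) and $h$, and the reappearance of the restriction $d\le 3$, which is exactly what guarantees the convergence of the tail series $\sum|k|^{-4}$ controlling the high-frequency part.
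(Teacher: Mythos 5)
Your proposal is correct and follows essentially the same route as the paper: diagonalize in the basis $\textbf{B}$, integrate out $r$ to reduce \eqref{a.1} to the lattice sum $\sum_{k\ne 0}\lambda_k^{-1}(1-e^{-\lambda_k h})^2$, and then bound that sum by splitting at the frequency $|k|\sim h^{-1/4}$ (equivalently $\lambda_k h\sim 1$), with the large-$h$ case handled separately via $\sum\lambda_k^{-1}<\infty$ for $d\le 3$. The only cosmetic difference is that your residual sum should run over all $k\in\mathbb{N}^d\setminus\{0\}$ (i.e.\ $0\le n(k)\le d-1$) rather than over $\mathbb{N}^{d,*}$ in the paper's notation, but this does not affect the estimate.
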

\begin{proof}
Using the expression \eqref{eq1.2}, we see that
\begin{align*}
&\int_0^\infty dr\int_{\mathbb{T}^d} dz\ \left(G(r+h;x,z) - G(r;x,z)\right)^2\\
&\qquad = \sum_{k\in\N^d} \frac{1}{2^{n(k)}\pi^d} \int_0^\infty dr \left(e^{-\lambda_k(r+h)}- e^{-\lambda_k r}\right)^2\\
&\qquad=\sum_{\substack{k\in\mathbb{N}^{d}\\ 0\le n(k)\le d-1}} \frac{1}{2^{n(k)+1}\pi^d} \frac{\left(1-e^{-\lambda_k h}\right)^2}{\lambda_k}
\le C_d \sum_{\substack{k\in\mathbb{N}^{d}\\ 0\le n(k)\le d-1}}\frac{\min(1,|k|^8h^2)}{|k|^4}\\
&\qquad=C_d\sum_{\substack{k\in\mathbb{N}^{d}\\ 0\le n(k)\le d-1}}\min\left(|k|^{-4}, |k|^4h^2\right):= C_d\ T(h).
\end{align*}
\noindent{\em Case $h\ge 1$}. We have $\min\left(|k|^{-4}, |k|^4h^2\right)= |k|^{-4}$. Thus, $T(h)= C<\infty$, which implies $T(h)\le C h$.

\noindent{\em Case $0<h <1$}.  Let $T(h)\le T_1(h) + T_2(h)$, where
\begin{align*}
T_1(h) &= \sum_{\substack{k\in  \mathbb{N}^{d},\ 0\le n(k)\le d-1\\ |k| \le \lfloor h^{-1/4}\rfloor}} \min\left(|k|^{-4}, |k|^4h^2\right),\\
T_2(h) &= \sum_{\substack{k\in  \mathbb{N}^{d}\ 0\le n(k)\le d-1\\ |k| > \lfloor h^{-1/4}\rfloor}} \min\left(|k|^{-4}, |k|^4h^2\right).
\end{align*}
For the first term, we have
\begin{align*}
T_1(h) \le \sum_{\substack{k\in  \mathbb{N}^{d}\ 0\le n(k)\le d-1\\ |k| \le \lfloor h^{-1/4}\rfloor}} |k|^4 h^2 \le 
h\sum_{\substack{k\in  \mathbb{N}^{d}\ 0\le n(k)\le d-1\\ |k| \le \lfloor h^{-1/4}\rfloor}} 1
\le C_d\  h^{1-d/4}.
\end{align*}
For the second term, we have
\beqn
T_2(h)\le \sum_{\substack{k\in  \mathbb{N}^{d}\ 0\le n(k)\le d-1\\ |k| > \lfloor h^{-1/4}\rfloor}} |k|^{-4}\le C_d\  h^{1-d/4}.
\eeqn
Since $1-d/4<1$, the estimates obtained in the two instances of $h$ imply \eqref{a.1}.
\end{proof}

\begin{lem}
\label{s-a-l.2}
For $p_j\in[0,1]$, $j=1,\ldots,m$, the following formula holds:
\begin{align}
\label{inclusion-exclusion}
1-\prod_{j=1}^m (1-p_j) &= \sum_{j=1}^m p_j - \sum_{\substack{i<j\\ 1\le i,j\le m}} p_i p_j 
+ \sum_{\substack{i<j<k\\ 1\le i,j,k\le m}} p_i p_j p_k\notag\\
&\qquad - \cdots 
+ (-1)^{m-1} p_1 p_2 \cdots p_m.
\end{align}
\end{lem}
\begin{proof}
On a probability space, consider independent events $(A_j)_{1\le j\le m}$ such that $p_j = P(A_j)$. Then,
\beqn
1-\prod_{j=1}^m (1-p_j)= 1-P(A_1^c\cap \ldots\cap A_m^c)) = 1-P(\cup_{j=1}^m A_j)^c = P(\cup_{j=1}^m A_j),
\eeqn
and \eqref{inclusion-exclusion} follows from the well-known inclusion-exclusion formula in probability theory.
\end{proof}

  \end{document}